\newtheorem{thm}{Theorem}
\newtheorem{Prop}[thm]{Proposition}
\newtheorem{Def}[thm]{Definition}
\newtheorem{Ex}[thm]{Example}
\newtheorem{Rmk}[thm]{Remark}
\date{}
\newcommand{\RR}{{\mathbb{R}}}
\begin{document}

\title{Realization of distance matrices by graphs of genus 1}

\author{Cristiano Bocci, Chiara Capresi}

\address{Cristiano Bocci\\
Department of Information Engineering and Mathematics, University of Siena\\
Via Roma, 56 Siena, Italy}
\email{cristiano.bocci@unisi.it}

\address{Chiara Capresi\\
Department of Information Engineering and Mathematics, University of Siena\\
Via Roma, 56 Siena, Italy}
\email{chiara.capresi@yahoo.it}

\begin{abstract} Given a distance matrix $D$, we study the behavior of its compaction vector and reduction matrix with respect to the problem of the realization of $D$ by a weighted graph. To this end, we first give a general result on realization by $n-$cycles and successively  we mainly  focus  on graphs of genus 1, presenting an algorithm which determines when a distance matrix is realizable by such a kind of graph, and then, shows how to construct it.
\end{abstract}

 \keywords{distance matrices, graph realization, genus 1 graphs}

\subjclass[2010]{Primary: 05C12. Secondary: 92D15, 68R05}

\maketitle

\section{Introduction}

Let $D$ be a matrix whose rows and columns are indexed by a set $X$.
We assume that $D$ is symmetric and has zero entries on the main
diagonal. In phylogenetics, these kind of matrices are called {\it
dissimilarity matrices}. Usually, we take $X=[n]:=\{1, 2, \dots,
n\}$. Hence a dissimilarity matrix $D$ can also be seen as a map
$D:[n]^2\to \RR$, with $D(i,j)=D(j,i)$ and $D(i,i)=0$ for each $i,j\in[n]$. 

A {\it distance matrix} (sometimes also called {\it metric}) is a non-negative dissimilarity matrix which satisfies the
triangle inequality $D(i,j)\leq D(i,k)+D(k,j)$ for all $i,j,k\in X$.  In \cite{Chung, HY}, one can find some results about this kind of matrices.

We say that $D$ has a {\it realization} if there is a weighted graph 
(so one where a non-negative weight is assigned to each edge) 
whose nodes' set contains $X$ and such that the distance (i.e. the length of
the shortest path) between nodes $i,j \in X$ is exactly $D(i,j)$. 
The problem of realizing a distance matrix by a graph of minimal total weight is a difficult problem.  A lot of authors have considered the special case where the graph is a tree and the distance matrix is a tree metric, also called an additive metric. 
This case has
been studied intensively and is well understood. The main
result is the so-called Tree Metric Theorem, (see \cite{Buneman} or \cite[Theorem 2.36]{PaSt}), which is based on  the {\it four-point
condition}, which  is a necessary and sufficient condition for a matrix
to be realized by a tree.

Efficient algorithms constructing such trees were published in \cite{CR} or \cite{SP}. But real data describe merely a tree metric because they arise from a similarity measure that includes errors. Such a measure appears in various fields such as the study of evolution (\cite{MN,A}), the synthesis of certain electrical circuits (\cite{HY2}) or the traffic modelling (\cite{BT}).
Thus, many authors address the problem of realization by a graph which is not a tree. For example in \cite{SP87} and \cite{SP} the author characterizes, respectively, distance matrices which have a unicyclic graph as unique optimal realization and introduces an algorithm for finding optimal graph of non-tree realizable distance matrices. In \cite{BR} can be found results of realization for some particular classes of graphs: paths, caterpillar, cycles, bipartite graphs, complete graphs and planar graphs.  The particular class of cactus metric is studied deeply in \cite{HHMM}.

In this paper we consider the problem of realizing a  distance matrix by a weighted graph of genus 1, i.e. a graph containing  only one cycle. This kind of graphs represents a first non-trivial case in the study of phylogenetic networks. See, for example, Figure 4 in \cite{BHMS}, Figures 2 and 5 in \cite{HMSW}, Figures 1 and 4 in \cite{vIM}.

Using and generalizing the definitions of compaction and reduction processes contained in \cite{V} (and basing, especially compaction, on the results in \cite{HY}), we establish a recursive method to ``contract'' a distance matrix $D$, until we end with a matrix where  contractions are no more possible. The analysis of this final matrix by Theorem \ref{n-agono}, and Propositions \ref{proprank2} and \ref{propstar},  permits to establish if  it is realizable, or not, by an $n-$cycle or a tree. Then, in case of positive answer, moving backward we are able to reconstruct the tree or the graph of genus 1 which realizes $D$.
Moreover, Theorem \ref{n-agono}, combined with a condition on compaction values, furnishes a result on optimality of the realization; this is stated in Proposition \ref{optimal}.
The case of graph of genus 1 can be seen as a particular case of cactus graphs, however we would like to point out that the approach of this paper is different with respect to \cite{HHMM}. The attempt, here, is to give an algorithm that gives an optimal realization of a distance matrix $M$, just working with repeated processes of compaction and reduction on it, instead of starting with a given realization of $M$ and searching for the optimal one.
The computational cost of the algorithm, described in Section \ref{AL}, is $O(n^4)$.

\section{Preliminaries}
We initially recall, from \cite{V}, the definitions of compaction and reduction processes.

Let $D$ be a distance matrix of order $n \times n$ and consider the $n \times n-$matrices $E_i$  where

\[
(E_i)_{jk}=
\begin{cases}
1 & \mbox{if } j=i\not= k\cr
1 & \mbox{if } j\not=i= k\cr
0 & \mbox{elsewhere}
\end{cases}
\]

\noindent Let $D_i(\alpha)=D-\alpha \cdot E_i$. The following result establishes for which values of $\alpha$, $D_i(\alpha)$ is still a distance matrix.

\begin{thm}[\cite{HY}, Lemma 1]\label{thmHak}
$D_i(\alpha)$ is a distance matrix if and only if $\alpha \leq  \frac{1}{2} \cdot (d_{pi}+d_{ir}-d_{pr})$, for all $p,r\not=i$.
\end{thm}

The new matrix $D_i(\alpha)$, obtained from $D$, is called the  {\bf $i-$th compaction matrix of $D$ with respect to $\alpha$}. The compaction of an index $i$ of $D$ leads to a new matrix with, eventually, equal rows (and by symmetry,  equal columns). By deleting all, but one, repeated rows and columns we obtain a new matrix which is called {\bf $i-$th reduction matrix of $D$ with respect to $\alpha$}.

In \cite{HY} and \cite{V} the authors work with any admissible $\alpha$ satisfying the condition of Theorem \ref{thmHak}, but, here, we are interested in considering the maximal value $\alpha$ for which $D_i(\alpha)$ is still a distance matrix. 
Moreover, we want to work for each  index  $i$ of compaction, with $i=1, \dots, n$, and collect these data  into a vector. This leads to the following

\begin{Def}
Given a distance matrix  $D$ of order $n \times n$, the {\bf compaction vector} of $D$, ${\bf a}_D=(a_1, \dots, a_n)$, is defined  as 
\[
a_i = \frac{1}{2} \cdot \min_{p\not=i,r\not= i}\{d_{pi}+d_{ir}-d_{pr}\}.
\]
\end{Def}

\begin{Rmk}\label{rmkleaf}\rm
Consider the tree in Figure \ref{rmkleaffig}. Its distance matrix is
\[
D=\begin{pmatrix}
0 &3 &5 &6\cr
3& 0 & 6 &7\\
5 &6 & 0&7\\
6 &7 &7&0
\end{pmatrix}.
\]
The compaction vector of $D$ is ${\bf a}_D=(1,2,3,4)$.
\begin{figure}
\centering

\begin{tikzpicture}
  [scale=0.8,auto=left]

  \node[circle,draw=black!100,fill=black!20,inner sep=2pt] (n2) at (0,-1) {$2$};
  \node[circle,draw=black!100,fill=black!20,inner sep=2pt]  (n1) at (0,1) {$1$};
  \node[circle,draw=black!100,fill=black!20,inner sep=2pt]  (n3) at (4,1) {$3$};
  \node[circle,draw=black!100,fill=black!20,inner sep=2pt]  (n4) at (4,-1) {$4$};
  \node[circle,draw=black!100,fill=black!20,inner sep=5pt]  (n5) at (1,0) {};
  \node[circle,draw=black!100,fill=black!20,inner sep=5pt]  (n6) at (3,0) {};
 
 \foreach \from/ \to in {n1/n5, n2/n5,n3/n6, n4/n6,n5/n6}
    \draw (\from) ->(\to);

\draw (0.6,0.7) node[scale=1] {$1$};
\draw (0.6,-0.7) node[scale=1] {$2$};
\draw (3.4,0.7) node[scale=1] {$3$};
\draw (3.4,-0.7) node[scale=1] {$4$};
\draw (2,0.3) node[scale=1] {$1$};    
    \end{tikzpicture}
    
    \caption{The tree of Remark \ref{rmkleaf}.}\label{rmkleaffig}
\end{figure}
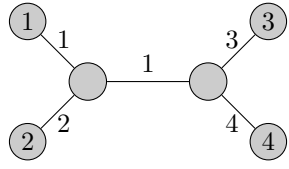
It is a well known fact that, in case $D$ is obtained from a weighted graph and if $i$ is a leaf, then the entry $a_i$ is the weight of the edge connecting the leaf $i$ to its internal node.

On the other hand, consider the graph in Figure \ref{rmkleaffig2}(a). Its distance matrix is
\begin{equation}\label{2rel}
D=\begin{pmatrix}
0 &3 &5 &4\cr
3& 0 & 5 &5\\
5 &5 & 0&5\\
4 &5 &5&0
\end{pmatrix}
\end{equation}
and its compaction vector is ${\bf a}_D=(1,\frac{3}{2},\frac{5}{2},2)$. In this case, the entries of 
${\bf a}_D$ do not correspond to the weights of the edges incident to the leaves. However,  we can notice that the graph in Figure \ref{rmkleaffig2}(b) has the same distance matrix, and hence the same compaction vector, of  the graph in Figure \ref{rmkleaffig2}(a) and, for this graph, the weights of the edges connecting the leaves to the internal nodes correspond to the entries of the compaction vector. Both these graphs are realization of the matrix in (\ref{2rel}).

\begin{figure}
\centering
\begin{tabular}{ccc}
\begin{tikzpicture}
  [scale=0.8,auto=left]

  \node[circle,draw=black!100,fill=black!20,inner sep=2pt] (n1) at (-1,-1) {$1$};
  \node[circle,draw=black!100,fill=black!20,inner sep=2pt]  (n2) at (-1,3) {$2$};
  \node[circle,draw=black!100,fill=black!20,inner sep=2pt]  (n3) at (3,3) {$3$};
  \node[circle,draw=black!100,fill=black!20,inner sep=2pt]  (n4) at (3,-1) {$4$};
  \node[circle,draw=black!100,fill=black!20,inner sep=5pt]  (n5) at (0,0) {};
  \node[circle,draw=black!100,fill=black!20,inner sep=5pt]  (n6) at (2,0) {};
 \node[circle,draw=black!100,fill=black!20,inner sep=5pt]  (n7) at (0,2) {};
  \node[circle,draw=black!100,fill=black!20,inner sep=5pt]  (n8) at (2,2) {};

 \foreach \from/ \to in {n1/n5, n2/n7,n3/n8, n4/n6,n5/n6,n6/n8,n8/n7,n7/n5}
    \draw (\from) ->(\to);

\draw (1,2.3) node[scale=1] {$2$};
\draw (1,-0.3) node[scale=1] {$1$};
\draw (-0.2,1) node[scale=1] {$1$};
\draw (2.2,1) node[scale=1] {$1$};
\draw (-0.65,-0.35) node[scale=1] {$1$};    
\draw (2.65,-0.35) node[scale=1] {$2$};    
\draw (-0.65,2.35) node[scale=1] {$1$};    
\draw (2.65,2.35) node[scale=1] {$2$};    

    \end{tikzpicture}
    &$\quad$ & \begin{tikzpicture}
  [scale=0.8,auto=left]

  \node[circle,draw=black!100,fill=black!20,inner sep=2pt] (n1) at (-1,-1) {$1$};
  \node[circle,draw=black!100,fill=black!20,inner sep=2pt]  (n2) at (-1,3) {$2$};
  \node[circle,draw=black!100,fill=black!20,inner sep=2pt]  (n3) at (3,3) {$3$};
  \node[circle,draw=black!100,fill=black!20,inner sep=2pt]  (n4) at (3,-1) {$4$};
  \node[circle,draw=black!100,fill=black!20,inner sep=5pt]  (n5) at (0,0) {};
  \node[circle,draw=black!100,fill=black!20,inner sep=5pt]  (n6) at (2,0) {};
 \node[circle,draw=black!100,fill=black!20,inner sep=5pt]  (n7) at (0,2) {};
  \node[circle,draw=black!100,fill=black!20,inner sep=5pt]  (n8) at (2,2) {};

 \foreach \from/ \to in {n1/n5, n2/n7,n3/n8, n4/n6,n5/n6,n6/n8,n8/n7,n7/n5}
    \draw (\from) ->(\to);

\draw (1,2.3) node[scale=1] {$2$};
\draw (1,-0.3) node[scale=1] {$1$};
\draw (-0.2,1) node[scale=1] {$\frac12$};
\draw (2.2,1) node[scale=1] {$\frac12$};
\draw (-0.65,-0.35) node[scale=1] {$1$};    
\draw (2.65,-0.35) node[scale=1] {$2$};    
\draw (-0.65,2.3) node[scale=1] {$\frac32$};    
\draw (2.65,2.3) node[scale=1] {$\frac52$};    

    \end{tikzpicture}\\
(a)&& (b)
    \end{tabular}
    \caption{The graphs of  second part of Remark \ref{rmkleaf}.}\label{rmkleaffig2}
\end{figure}
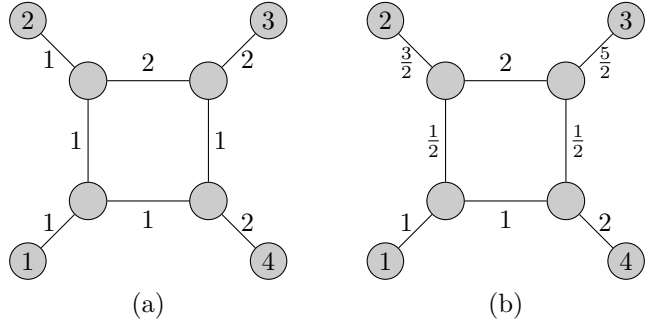

\end{Rmk}

Once the compaction vector  ${\mathbf a}_D$ of $D$ is computed, we consider the following matrix
\begin{equation}\label{cm}
D({\mathbf a}_D) = D - a_1 \cdot E_1 - \cdots - a_n \cdot E_n
\end{equation}

\begin{Def}
The matrix $D({\mathbf a}_D)$ in {\rm (\ref{cm})} is called the {\bf compaction matrix of $D$}.
\end{Def}

\begin{Rmk}\rm Notice that now we omit the expression   ``with respect to  {\bf a}$_D$'' and we do not refer to a specific index $i$, since we are considering the compaction process along all the indices and only with respect to the corresponding values of the compaction vector; while in the standard definition  of compaction  matrix the computation is taken only with respect to an index and a given value $\alpha$ (\cite{V}).
\end{Rmk}

\begin{Def}\label{defrm}
Let $D({\mathbf a}_D)$ be the compaction matrix of $D$ as defined in {\rm(\ref{cm})}. The matrix $\hat{D}_{{\mathbf a}_D}$ obtained removing from $D({\mathbf a}_D)$ all but one repeated rows and columns is called  {\bf reduction matrix of} $D$.
\end{Def}

\begin{Rmk}\rm
Our process of reduction is different with respect to the one in \cite{V}, where only one of the equal rows (and columns) is removed.
\end{Rmk}

\begin{Ex}\label{ex2}\rm
Let $D$ be the following distance matrix
\[
D=\begin{pmatrix}
0 & 4 & 6 & 6 & 3\\
4 & 0 & 5 & 5 & 5\\
6 & 5 & 0 & 2 & 5\\
6 & 5 & 2 & 0 & 5\\
3 & 5 & 5 & 5 & 0
\end{pmatrix}\]
Its compaction vector is
\[
{\mathbf a}_D=\left( 1, \frac{3}{2}, 1, 1, 1\right)
\]
from which we get
\[
D({\mathbf a}_D)=D-E_1 -\frac{3}{2}E_2-E_3-E_4-E_5=
\begin{pmatrix}
0 & \frac{3}{2} & 4 &4 &1\\
\frac{3}{2} & 0 & \frac{5}{2} &\frac{5}{2} &\frac{5}{2}\\
4 & \frac{5}{2} &0 & 0 & 3\\
4 & \frac{5}{2} &0 & 0 & 3\\
1 & \frac{5}{2} & 3 &3& 0
\end{pmatrix}.
\]
Notice that the third and fourth rows of $D({\mathbf a}_D)$ are equal, so the reduction matrix is
\[
\hat{D}_{{\mathbf a}_D}= 
\begin{pmatrix}
0 & \frac{3}{2} &4 &1\\
\frac{3}{2} & 0 & \frac{5}{2}  &\frac{5}{2}\\
4 & \frac{5}{2} &0  & 3\\
1 & \frac{5}{2}  &3& 0
\end{pmatrix}.
\]
\end{Ex}

\begin{Ex}\label{ex1}\rm
Let $D$ be the following distance matrix
\[
D=\begin{pmatrix}
0 & 5 & 7 & 7\\
5 & 0 & 7 & 10\\
7 & 7 & 0 & 9\\
7 & 10 & 9 & 0
\end{pmatrix}.\]
Its compaction vector is
\[
{\mathbf a}_D=\left( 1, \frac{5}{2},3 ,\frac{9}{2}\right)
\]
from which we get
\[
D({\mathbf a}_D)=D-E_1 -\frac{5}{2}E_2-3E_3-\frac{9}{2}E_4=
\begin{pmatrix}
0 &\frac{3}{2} & 3 &\frac{3}{2} \\
\frac{3}{2} &0 &\frac{3}{2} & 3\\
3 & \frac{3}{2}  & 0 & \frac{3}{2} \\
\frac{3}{2}  & 3 & \frac{3}{2} &0
\end{pmatrix}.
\]
Since there are not  equal rows (and columns) in $D({\mathbf a}_D)$,  we conclude that $\hat{D}_{{\mathbf a}_D}= D({\mathbf a}_D)$.
\end{Ex}

We prove now two results concerning particular behavior of compaction and reduction matrices.

\begin{Prop}\label{proprank2}
Let $D$ be a distance matrix of order $n \times n$, if its reduction matrix $\hat{D}_{{\mathbf a}_D}$ has order 2, then $D$ is realizable by a tree.
\end{Prop}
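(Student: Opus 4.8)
The key observation is that a $2\times 2$ reduction matrix means that the compaction matrix $D(\mathbf{a}_D)$ has exactly two distinct rows (up to repetition). If $\hat D_{\mathbf{a}_D} = \begin{pmatrix} 0 & c \\ c & 0 \end{pmatrix}$ for some $c \ge 0$, then after compacting every index $i$ by its value $a_i$, the resulting matrix records only two "clusters" of indices, with all within-cluster distances equal to $0$ and all between-cluster distances equal to $c$. So the plan is to read off the tree directly from this structure: partition $[n] = S_1 \sqcup S_2$ according to which of the two rows each index produces in $D(\mathbf{a}_D)$; build a tree with two internal (Steiner) nodes $u_1, u_2$ joined by an edge of weight $c$, attach each leaf $i \in S_k$ to $u_k$ by an edge of weight $a_i$, and then verify that the shortest-path metric of this weighted tree is exactly $D$.

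**The steps, in order.** First I would recall that $D(\mathbf{a}_D) = D - \sum_i a_i E_i$, so that for $i \ne j$ we have $d_{ij} = a_i + a_j + (D(\mathbf{a}_D))_{ij}$. Second, I would use Theorem~\ref{thmHak} to note that each single compaction $D \mapsto D_i(a_i)$ preserves the distance-matrix property, and argue (by induction on the number of indices compacted, taking care that compacting index $i$ by $a_i$ does not decrease the later admissible values below $a_j$, or more simply by invoking the simultaneous statement that $D(\mathbf{a}_D)$ is itself a dissimilarity matrix with nonnegative entries) that $D(\mathbf{a}_D)$ has nonnegative entries and zero diagonal. Third, having established that $D(\mathbf{a}_D)$ has only two distinct rows, I define $S_1, S_2$ and the off-diagonal constant $c \ge 0$; within each $S_k$ the entries of $D(\mathbf{a}_D)$ vanish, and across the two sets they all equal $c$. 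Fourth, I construct the explicit weighted tree described above and compute: for $i, j$ in the same $S_k$, the tree-distance is $a_i + a_j = d_{ij}$; for $i \in S_1$, $j \in S_2$, it is $a_i + c + a_j = d_{ij}$. This matches $D$ on all pairs, so the tree realizes $D$. Finally, I should address the degenerate possibilities: if one of $S_1, S_2$ is empty (i.e.\ $D(\mathbf{a}_D)$ is the zero matrix), then all $d_{ij} = a_i + a_j$ and $D$ is realized by a star; if $c = 0$ one can contract the internal edge and still has a (star) tree.

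**The main obstacle.** The routine verification that the constructed tree has the right metric is immediate once the structure of $D(\mathbf{a}_D)$ is known; the real content is the claim that a $2\times 2$ reduction matrix forces $D(\mathbf{a}_D)$ to have the clean "two-cluster with constant cross-distance" shape, and in particular that the off-diagonal entries are constant within each block equal to $0$ and across blocks equal to a single value $c$. The within-block vanishing and the single cross-value follow tautologically from "only two distinct rows," but one must also check consistency: that the common cross-value seen from a row in $S_1$ agrees with that seen from $S_2$ (forced by symmetry of $D(\mathbf{a}_D)$), and — the subtler point — that $D(\mathbf{a}_D)$ really does have nonnegative entries so that the weights $a_i$ and $c$ are legitimate edge weights. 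This last nonnegativity is where I expect to lean hardest on Theorem~\ref{thmHak} and the definition of the compaction vector (each $a_i$ is exactly the largest admissible $\alpha$, so no entry is over-subtracted), and it is the step I would write out most carefully.
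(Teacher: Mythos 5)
Your proposal is correct and follows essentially the same route as the paper: read off the two-block structure of $D({\mathbf a}_D)$ forced by the order-2 reduction matrix, so that $d_{ij}=a_i+a_j$ within a block and $d_{ij}=a_i+a_j+\delta$ across blocks, and realize $D$ by the tree with two internal nodes joined by an edge of weight $\delta$ and each leaf $i$ attached by an edge of weight $a_i$. The extra points you flag (nonnegativity of the entries of $D({\mathbf a}_D)$, degenerate cases) are sound refinements but do not change the argument, which matches the paper's proof.
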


\begin{proof}
If $\hat{D}_{{\mathbf a}_D}$ has order 2, then 
\begin{equation}\label{eqr2}
\hat{D}_{{\mathbf a}_D}
=\begin{pmatrix}
0 & \delta\\
\delta & 0
\end{pmatrix}
\end{equation}
for some $\delta\in \RR_{>0}$.

Moreover, for reduction process, there exist positive integers  $n_1$ and $n_2$, with $n_1+n_2=n$, such that $D({\mathbf a}_D)$ has a set of $n_1$ equal rows (and correspondent columns) and a set of  $n_2$  equal rows (and correspondent columns). For simplicity we assume that the first $n_1$ rows are equal, and hence the remaining $n_2$ rows are equal.

Then by (\ref{eqr2}) we get that $D({\mathbf a}_D)$ has the form
\begin{equation}\label{eqr2_2}
\left(\begin{array}{c|c}
{\mathbf 0_{n_2\times n_1}} & \delta{\mathbf 1_{n_2\times n_2}} \\
\hline
\delta{\mathbf 1_{n_1\times n_1}} & {\mathbf 0_{n_1\times n_2}}
\end{array}\right)
\end{equation}
where ${\mathbf 0_{k\times l}}$ denotes the null matrix of order $k\times l$ and  $\delta{\mathbf 1_{k\times l}}$  denote the matrix of order $k\times l$ whose entries are all equals to $\delta$.

Let  {\bf a}$_D = (a_1,\dots, a_n)$ be the compaction vector of $D$, by definition of compaction matrix we know that
\begin{equation}\label{eqr2_3}
D({\mathbf a}_D)=\begin{pmatrix}
0 & d_{12}-a_1-a_2 & \cdots & d_{1n}-a_1-a_n\\
d_{21}-a_1-a_2 & 0 &  \cdots & d_{2n}-a_2-a_n\\
\vdots & \vdots & \ddots & \vdots\\
d_{n1}-a_1-a_n &   d_{n2}-a_2-a_n &  \cdots &0
\end{pmatrix}
\end{equation}

Comparing the entries of the matrices in (\ref{eqr2_2}) and (\ref{eqr2_3}) one has
\[
d_{ij}=
\begin{cases}
0 & \mbox{if } i = j\\
a_i+a_j & \mbox{if } 1\leq i <j \leq n_1 \mbox{ or }  n_1+1\leq i <j \leq n_1+n_2\\
\delta+a_i+a_j & \mbox{otherwise}
\end{cases}
\]
from which follows that $D$ has a realization by the tree in Figure \ref{treeprop1}.
\end{proof}

\begin{Rmk} We want to point out that, if we reduce one more time the matrix  $\hat{D}_{{\mathbf a}_D}$, we end up with a single point, and this is a well-known characteristic of a tree metric.
\end{Rmk}

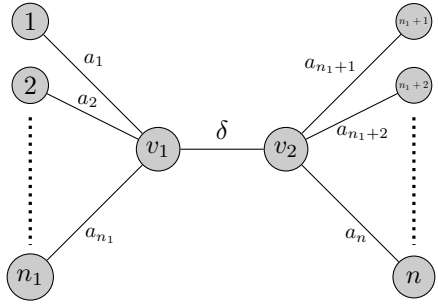
\begin{figure}
\centering
\begin{tikzpicture}
  [scale=0.85,auto=left]
  \node[circle,draw=black!100,fill=black!20,inner sep=2pt] (n1) at (1,1) {$v_1$};
  \node[circle,draw=black!100,fill=black!20,inner sep=2pt]  (n2) at (3,1) {$v_2$};
  \node[circle,draw=black!100,fill=black!20,inner sep=2pt] (n3) at (-1,3) {$1$};
  \node[circle,draw=black!100,fill=black!20,inner sep=2pt]  (n4) at (-1,2) {$2$};
  \node[circle,draw=black!100,fill=black!20,inner sep=2pt] (n5) at (-1,-1) {$n_1$};
  \node[circle,draw=black!100,fill=black!20,inner sep=2pt,scale=0.4]  (n6) at (5,3) {$n_1+1$}; 
  \node[circle,draw=black!100,fill=black!20,inner sep=2pt,scale=0.4]  (n7) at (5,2) {$n_1+2$}; 
    \node[circle,draw=black!100,fill=black!20,inner sep=3pt] (n8) at (5,-1) {$n$};
 \foreach \from/ \to in {n1/n2, n1/n3,n1/n4,n1/n5,n2/n6, n2/n7,n2/n8}
    \draw (\from) ->(\to);

\draw (2,1.3) node[scale=1] {$\delta$};
\draw (0,2.4) node[scale=0.8] {$a_1$};
\draw (-0.1,1.75) node[scale=0.8] {$a_2$};
\draw (0.1,-0.35) node[scale=0.8] {$a_{n_1}$};

\draw (3.7,2.3) node[scale=0.8] {$a_{n_1+1}$};
\draw (4.2,1.25) node[scale=0.8] {$a_{n_1+2}$};
\draw (4.1,-0.35) node[scale=0.8] {$a_n$};

\draw[very thick, dotted] (-1,1.5) -- (-1, -0.5);
\draw[very thick, dotted] (5,1.5) -- (5, -0.5);
\end{tikzpicture}
\caption{Realization for Proposition \ref{proprank2}.}\label{treeprop1}
\end{figure}

\begin{Ex}\label{ex3}\rm Let $D$ be the distance matrix
\[
D=\begin{pmatrix}
0 & 2 & 3 & 3\\
2 & 0 & 3 & 3\\
3 & 3 &0 &2\\
3 & 3 &2 &0\\
\end{pmatrix}\]
From its compaction vector
\[
{\mathbf a}_D=\left( 1,1,1,1\right)
\]
we get
\[
D({\mathbf a}_D)=D-E_1 -E_2-3E_3-E_4=
\begin{pmatrix}
0 & 0& 1 & 1\\
0 & 0& 1 & 1\\
1 & 1 & 0 & 0\\
1 & 1 & 0 & 0
\end{pmatrix}.
\]
In this case $D({\mathbf a}_D)$ has two pairs of equal rows and, after the reduction process, we get
\[
\hat{D}_{{\mathbf a}_D}=\begin{pmatrix}
 0& 1 \\
1 & 0 
\end{pmatrix}.
\]
Then, by Proposition \ref{proprank2}, $D$ has a realization by the tree in Figure \ref{albex3}.

\begin{figure}
\centering
\begin{tikzpicture}
  [scale=0.85,auto=left]
  \node[circle,draw=black!100,fill=black!20,inner sep=2pt] (n1) at (1,1) {$v_1$};
  \node[circle,draw=black!100,fill=black!20,inner sep=2pt]  (n2) at (3,1) {$v_2$};
  \node[circle,draw=black!100,fill=black!20,inner sep=2pt] (n3) at (0,2) {$1$};
  \node[circle,draw=black!100,fill=black!20,inner sep=2pt]  (n4) at (0,0) {$2$};
  \node[circle,draw=black!100,fill=black!20,inner sep=2pt] (n5) at (4,2) {$3$};
  \node[circle,draw=black!100,fill=black!20,inner sep=2pt]  (n6) at (4,0) {$4$}; 
 \foreach \from/ \to in {n1/n2, n1/n3,n1/n4,n2/n5,n2/n6}
    \draw (\from) ->(\to);

\draw (2,1.3) node[scale=1] {$1$};
\draw (0.65,1.65) node[scale=1] {$1$};
\draw (0.65,0.35) node[scale=1] {$1$};
\draw (3.35,1.65) node[scale=1] {$1$};
\draw (3.35,0.35) node[scale=1] {$1$};

\end{tikzpicture}
\caption{Tree which realizes the matrix of Example \ref{ex3}.}\label{albex3}
\end{figure}
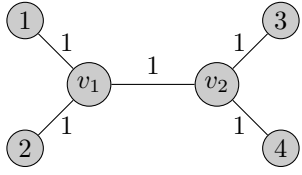

\end{Ex}

The second result concerns the compaction matrix.

\begin{Prop}\label{propstar}
Let $D$ be a distance matrix of order $n \times n$, if its compaction matrix $D({\mathbf a}_D)$ is the null matrix, then $D$ is realizable by a star tree with $n$ leaves (and a unique internal node of valency $n$).
\end{Prop}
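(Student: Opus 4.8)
The plan is to mimic the proof of Proposition \ref{proprank2}, exploiting the defining identity of the compaction matrix. Suppose $D({\mathbf a}_D)$ is the null matrix. By definition (\ref{cm}), for every pair of distinct indices $i,j$ we have $0 = (D({\mathbf a}_D))_{ij} = d_{ij} - a_i - a_j$, hence $d_{ij} = a_i + a_j$ for all $i\neq j$. First I would observe that each $a_i$ is strictly positive: indeed, if some $a_i = 0$, then picking $p,r$ achieving the minimum in the definition of $a_i$ would force $d_{pi} + d_{ir} = d_{pr}$, and combined with $d_{pi} = a_p + a_i = a_p$, $d_{ir} = a_i + a_r = a_r$, $d_{pr} = a_p + a_r$ this is consistent, so I should instead argue positivity from the triangle inequality applied carefully, or simply note that if $a_i=0$ the star edge to leaf $i$ has weight $0$ and leaf $i$ coincides with the internal node, which still yields a valid (degenerate) realization; to keep things clean I would just allow non-negative weights as the paper does and not belabor this point.

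Next I would construct the candidate graph: take a single internal node $v$ and attach to it $n$ leaves $1,\dots,n$, giving the edge $\{v,i\}$ the weight $a_i$. This is a star tree with $n$ leaves and a unique internal node of valency $n$. The key step is verifying that the shortest-path distance in this graph between leaves $i$ and $j$ equals $D(i,j)$. Since the graph is a tree, the unique path from $i$ to $j$ passes through $v$ and has length $a_i + a_j$, which equals $d_{ij}$ by the identity derived above; and the distance from a leaf to itself is $0 = d_{ii}$. Hence the star tree realizes $D$.

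The only genuine subtlety, which I would address briefly, is making sure the construction is well-posed, i.e. that the weights $a_i$ are non-negative so that the weighted star is an admissible realization. Non-negativity of the compaction values $a_i$ follows from Theorem \ref{thmHak} together with the triangle inequality for $D$: for any $p,r\neq i$ the quantity $d_{pi} + d_{ir} - d_{pr}$ is non-negative, so its half-minimum $a_i$ is non-negative as well. With that remark in place the proof is complete. I expect no real obstacle here — the main content is simply reading off $d_{ij} = a_i + a_j$ from the hypothesis and recognizing this as the distance matrix of a weighted star; the proposition is essentially a degenerate (single-internal-vertex) specialization of Proposition \ref{proprank2} with $\delta$ collapsed to $0$, and I would point that out to the reader.
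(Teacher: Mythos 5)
Your proposal is correct and follows essentially the same route as the paper: read off $d_{ij}=a_i+a_j$ for $i\neq j$ from the vanishing of $D({\mathbf a}_D)$ and realize $D$ by the star with edge weights $a_i$ at the unique internal node. The extra remark on non-negativity of the $a_i$ (via the triangle inequality) is a harmless addition the paper leaves implicit.
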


\begin{proof}
Let  {\bf a}$_D = (a_1,\dots, a_n)$ be the  compaction vector of $D$. By definition of compaction matrix we know that
\[
D({\mathbf a}_D)=\begin{pmatrix}
0 & d_{12}-a_1-a_2 & \cdots & d_{1n}-a_1-a_n\\
d_{21}-a_1-a_2 & 0 &  \cdots & d_{2n}-a_2-a_n\\
\vdots & \vdots & \ddots & \vdots\\
d_{n1}-a_1-a_n &   d_{n2}-a_2-a_n &  \cdots &0
\end{pmatrix}
\]
Since by hypothesis, $D({\mathbf a}_D)$ is the null matrix, one has
\[
d_{ij}=
\begin{cases}
a_i+a_j & \mbox{if } i\not= j\\
0 & \mbox{otherwise}
\end{cases}
\]
from which easily follows that $D$ has a realization by the star tree in Figure \ref{treeprop2}.
\end{proof}

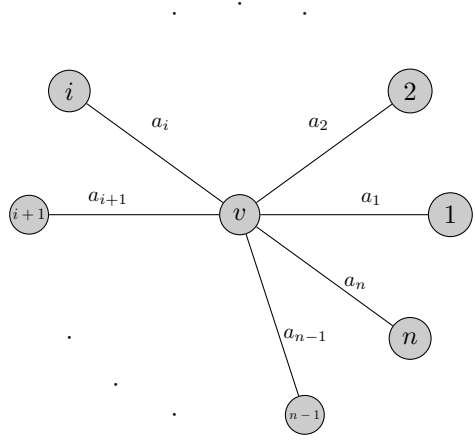
\begin{figure}
\centering
\begin{tikzpicture}
  [scale=0.7,auto=left]
  \node[circle,draw=black!100,fill=black!20,inner sep=3pt] (n1) at (0,0) {$v$};
  \node[circle,draw=black!100,fill=black!20,inner sep=3pt] (n2) at (0:4cm) {$1$};
  \node[circle,draw=black!100,fill=black!20,inner sep=3pt]  (n3) at (36:4cm) {$2$};
  \node[circle,draw=black!100,fill=black!20,inner sep=3pt] (n4) at (144:4cm) {$i$};
    \node[circle,draw=black!100,fill=black!20,inner sep=1pt,,scale=0.6] (n5) at (180:4cm) {$i+1$};
  \node[circle,draw=black!100,fill=black!20,inner sep=2pt,scale=0.5]  (n6) at (288:4cm) {$n-1$}; 
  \node[circle,draw=black!100,fill=black!20,inner sep=3pt]  (n7) at (324:4cm) {$n$}; 
  \node[circle,draw=black!00,fill=black!0,inner sep=3pt]  (n8) at (72:4cm) {$\cdot$}; 
  \node[circle,draw=black!00,fill=black!0,inner sep=3pt]  (n9) at (90:4cm) {$\cdot$}; 
  \node[circle,draw=black!00,fill=black!0,inner sep=3pt]  (n10) at (108:4cm) {$\cdot$}; 
  \node[circle,draw=black!00,fill=black!0,inner sep=3pt]  (n11) at (216:4cm) {$\cdot$}; 
  \node[circle,draw=black!00,fill=black!0,inner sep=3pt]  (n12) at (234:4cm) {$\cdot$}; 
  \node[circle,draw=black!00,fill=black!0,inner sep=3pt]  (n13) at (252:4cm) {$\cdot$}; 

 \foreach \from/ \to in {n1/n2,n1/n3,n1/n4,n1/n5,n1/n6,n1/n7}
    \draw (\from) ->(\to);

\draw (2.5,0.3) node[scale=0.8] {$a_1$};
\draw (1.5,1.73) node[scale=0.8] {$a_2$};
\draw (-1.5,1.73) node[scale=0.8] {$a_i$};
\draw (-2.5,0.3) node[scale=0.8] {$a_{i+1}$};

\draw (1.25,-2.3) node[scale=0.8] {$a_{n-1}$};
\draw (2.2,-1.3) node[scale=0.8] {$a_n$};

\end{tikzpicture}
\caption{Realization for Proposition \ref{propstar}.}\label{treeprop2}
\end{figure}

\begin{figure}
\centering
\begin{tikzpicture}
  [scale=0.85,auto=left]
  \node[circle,draw=black!100,fill=black!20,inner sep=2pt] (n1) at (2,1) {$v$};
  \node[circle,draw=black!100,fill=black!20,inner sep=2pt] (n2) at (0,2) {$1$};
  \node[circle,draw=black!100,fill=black!20,inner sep=2pt]  (n3) at (0,0) {$2$};
  \node[circle,draw=black!100,fill=black!20,inner sep=2pt] (n4) at (4,2) {$3$};
  \node[circle,draw=black!100,fill=black!20,inner sep=2pt]  (n5) at (4,0) {$4$}; 
 \foreach \from/ \to in {n1/n2, n1/n3,n1/n4,n1/n5}
    \draw (\from) ->(\to);

\draw (1,1.75) node[scale=1] {$1$};
\draw (1,0.75) node[scale=1] {$2$};
\draw (3,1.75) node[scale=1] {$3$};
\draw (3,0.75) node[scale=1] {$4$};

\end{tikzpicture}
\caption{Star tree which realizes the matrix of Example \ref{ex4}.}\label{albex4}
\end{figure}
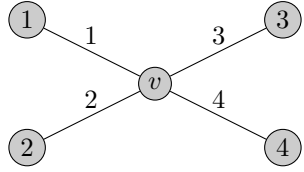

\begin{Ex}\label{ex4}\rm
Let $D$ be the distance matrix
\[
D=\begin{pmatrix}
0 & 3 & 4 & 5\\ 
3 & 0 & 5 & 6\\
4 & 5 & 0 & 7 \\
5 & 6 & 6 & 0 
\end{pmatrix}\]
From its compaction vector
\[
{\mathbf a}_D=\left( 1, 2,3,4\right)
\]
we get
\[
D({\mathbf a}_D)=D-E_1 -2E_2-3E_3-4E_4=
\begin{pmatrix}
0 & 0 & 0 &0\\
0 & 0 & 0 &0\\
0 & 0 & 0 &0\\
0 & 0 & 0 &0
\end{pmatrix}.
\]
Then, by Proposition \ref{propstar}, $D$ has a realization by the tree in Figure \ref{albex4}.

\end{Ex}

\section{Realizations by cycles}

In this section we give a useful result for matrices which are realizable by cycles. While in other research papers the order of the vertices is given a priori, or the vertices are eventually re-labelled  after some algorithm (see, for example, Definition 4.2 in \cite{BR}), the following theorem gives a more general characterization of such matrices also if the order of vertices is not known.

Let us denote by $(i_1, i_2, \dots, i_n)$ the permutation on $[n]$ such that
\[
\begin{array}{cccc}
1 & 2 & \cdots & n\\
\downarrow & \downarrow &  & \downarrow\\
i_1 & i_2 & \cdots & i_n
\end{array}
\]

We recall that a permutation  $\pi$ of $[n]$ is {\bf real} if it has only one term in the cycle-notation. For example $\pi_1=(2,3,4,5,1)$ is real, while $\pi_2=(2, 1, 4, 5,3)$ is not since it can be written as  $(2,1)(4,5,3)$.

\begin{thm} \label{n-agono}
A distance matrix $D$, of order $n$, has a realization by an $n-$cycle $C$ if and only if there exists a real permutation $\pi$ of $[n]$, such that
\begin{equation}\label{eqngonreal}
 d_{i \pi ^s(i)} = \min \left\{\sum_{t=0}^{s-1}d_{\pi^t(i) \pi^{t+1} (i)},  \sum_{t=s}^{n-1} d_{\pi^t(i) \pi^{t+1}(i)} \right\}
\end{equation}
for all $i=1,...,n$ and  $s=1,...,n-1$.
\end{thm}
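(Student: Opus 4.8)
The plan is to prove both directions separately, using the combinatorial structure of an $n$-cycle. For the forward direction, suppose $D$ is realized by a weighted $n$-cycle $C$. Reading the vertices of $C$ in cyclic order gives a cyclic sequence of the labels $1,\dots,n$; encoding ``move to the next vertex along the cycle'' as a permutation $\pi$ produces exactly a real permutation, since traversing the whole cycle visits every vertex in one orbit. Writing $w_j$ for the weight of the edge between $\pi^{j}(i)$ and $\pi^{j+1}(i)$, the distance in a cycle between two vertices is the minimum of the two arc-lengths joining them, which is precisely the right-hand side of \eqref{eqngonreal}. This establishes necessity more or less immediately once the bookkeeping of indices is set up carefully; the only mild subtlety is checking that the formula is independent of which basepoint $i$ one uses and that the two arcs referred to in the $\min$ are the correct ones for every pair $(i,\pi^s(i))$.

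For the converse, suppose a real permutation $\pi$ exists satisfying \eqref{eqngonreal}. Relabel so that $\pi=(2,3,\dots,n,1)$, i.e. $\pi^t(1)=t+1 \pmod n$. Build the candidate graph $C$: an $n$-cycle on vertices $1,2,\dots,n$ in this cyclic order, with the edge between consecutive vertices $\pi^t(1)$ and $\pi^{t+1}(1)$ assigned weight $w_t := d_{\pi^t(1)\,\pi^{t+1}(1)}$. I first need to check these weights are nonnegative — this follows because each $w_t = d_{ab}$ for adjacent $a,b$, and $D$ is a distance matrix. Then I must verify that the shortest-path distance in $C$ between any two vertices $i$ and $j$ equals $d_{ij}$. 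Given $i$ and $s$ with $j=\pi^s(i)$, the two paths in $C$ from $i$ to $j$ have lengths $\sum_{t=0}^{s-1} w_{?}$ and $\sum_{t=s}^{n-1} w_{?}$ after re-indexing the edge weights relative to basepoint $i$; the shortest path in any cycle is the shorter of the two arcs, so the $C$-distance is exactly the min of these two sums, which by hypothesis \eqref{eqngonreal} equals $d_{i\pi^s(i)}=d_{ij}$. One has to confirm the edge-weight labels ``$w$ relative to basepoint $i$'' agree with the original labels ``$w$ relative to basepoint $1$'' up to the obvious cyclic shift, so that the hypothesis applied at basepoint $i$ genuinely controls the arcs of the single fixed graph $C$; this consistency is where the ``real permutation'' hypothesis does its work, guaranteeing the orbit structure is a single $n$-cycle rather than a disjoint union.

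I expect the main obstacle to be purely notational rather than conceptual: managing the modular index arithmetic so that, for a fixed graph $C$, the quantity $d_{i\pi^s(i)}$ is correctly matched with the two complementary arcs of $C$ joining $i$ to $\pi^s(i)$, uniformly over all $i$ and $s$, and verifying that the hypothesis \eqref{eqngonreal} for all $i$ (not just $i=1$) is exactly what is needed — equivalently, that \eqref{eqngonreal} at a single basepoint already forces it at all basepoints, given that $\pi$ is real. A clean way to handle this is to fix the cyclic order once, define arc-length functions $L^+(i,j)$ and $L^-(i,j)$ on the cycle, show each equals one of the two sums in \eqref{eqngonreal}, and then invoke that in a cycle the metric is $\min(L^+,L^-)$; the ``real'' hypothesis is used precisely to know that $\{\pi^s(i): s=0,\dots,n-1\}=[n]$ so that every vertex is reachable and the two arcs partition the edge set. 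Finally, the degenerate cases (some $w_t=0$, so $C$ is really a shorter cycle or even a path) cause no trouble: the distance formula still holds, and such $D$ is then also realizable by the genuinely shorter graph, consistent with the statement.
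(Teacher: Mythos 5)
Your proposal is correct and follows essentially the same route as the paper: read off a real permutation from the cyclic order for necessity, and for sufficiency build the $n$-cycle on the $\pi$-orbit of $1$ with edge weights $d_{\pi^t(1)\pi^{t+1}(1)}$, then use that the cycle metric is the minimum of the two arc-lengths, which is exactly hypothesis (\ref{eqngonreal}). Your extra remarks on basepoint-independence and zero-weight edges only make explicit bookkeeping that the paper leaves implicit.
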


\begin{proof}
If $D$ has a realization by an $n-$cycle $C$, let $i_1, i_2, \dots, i_n$, with $i_1=1$, be the labels of the nodes taken in clockwise order. Then $\pi=(i_2, \dots, i_n,1)$ is a real permutation and  since, on an $n-$cycle, the distance between nodes $i$ and $\pi ^s(i)$  is the minimum of the two paths from $i$ to $\pi ^s(i)$, equations (\ref{eqngonreal}) are satisfied, for all $i=1,...,n$ and  $s=1,...,n-1$.

Vice versa, suppose there exists a real permutation $\pi$ for which $D$ satisfies equation  (\ref{eqngonreal}), for all $i=1,...,n$ and  $s=1,...,n-1$. Then let $C$ be the graph of vertices  $V=\{1, 2, \dots, n\}$ and edges
\[
E=\{ (\pi^i(1),\pi^{i+1}(1)), \, i=0, \dots,n-1\}.
\]
It is easy to check that $C$ is an $n-$cycle.
Finally, putting the weight $d_{\pi^i(1)\pi^{i+1}(1)}$ on the edge $(\pi^i(1),\pi^{i+1}(1))$, for all $i=0, \dots,n-1$, we get a weighted $n-$cycle that, by equation (\ref{eqngonreal}), is a realization of $D$.
\end{proof}

\begin{Rmk}\label{rmkperm}\rm
According to the fact that, on an $n-$cycle, the distance between nodes $i$ and $j$  is the minimum of the two paths from $i$ to $j$, the reader could be surprised of formula (\ref{eqngonreal}) instead of the following formula
\begin{equation}\label{eqngon}
d_{ij}=\min \{d_{ii+1}+d_{i+1i+2}+\cdots + d_{j-1,j},d_{ii-1}+d_{i-1i-2}+\cdots + d_{j+1,j} \}
\end{equation}
where the indices are taken modulo $n$. Notice that (\ref{eqngon}) expresses the condition of  a path for an $n-$cycle where the indices are labeled in clockwise order.

However in general, it is not true that if a distance matrix $D$, of order $n\times n$, has a realization by an $n-$cycle $C$, then the adjacent vertices of $C$ correspond to adjacent rows (or columns) of $D$.

For example, if we consider the distance matrix
\[
D=\begin{pmatrix}
0&3&4&1&3&2\\
3&0&2&4&3&1\\
4&2&0&3&1&3\\
1&4&3&0&2&3\\
3&3&1&2&0&4\\
2&1&3&3&4&0
\end{pmatrix}
\]
we can notice that (\ref{eqngon}) does not work for some choice of $i$ and $j$. For example

\begin{equation}\label{casocattivo}
\begin{split}
3=d_{15}&\not=\min\{ d_{12}+d_{23}+d_{34}+d_{45},  d_{16}+d_{65}  \}\\
&=\min\{ 3+2+3+2,  2+4 \}\\
&=\min\{ 10,  6 \}=6
\end{split}
\end{equation}
However, $D$ has a realization by the $6-$cycle in Figure \ref{esa2}. Here we can notice that the vertices are ordered following the permutation $\pi=(4,5,3,2,6,1)$. Since $\pi^2(1)=5$, we can compute again (\ref{casocattivo}) using  (\ref{eqngonreal}), obtaining

{\small{\begin{equation*}
\begin{split}
3&=d_{15}=d_{1\pi^2(1)}=\\
&=\min\{ d_{1\pi(1)}+d_{\pi(1)\pi^2(1)},  d_{\pi^2(1)\pi^3(1)}+d_{\pi^3(1)\pi^4(1)}+d_{\pi^4(1)\pi^5(1)} + d_{\pi^5(1)1} \}\\
&=\min\{ d_{14}+d_{45},  d_{53}+d_{32}+d_{26} + d_{61} \}\\
&=\min\{ 1+2, 1+2+1+2 \}\\
&=\min\{ 3,  6 \}=3.
\end{split}
\end{equation*}}}

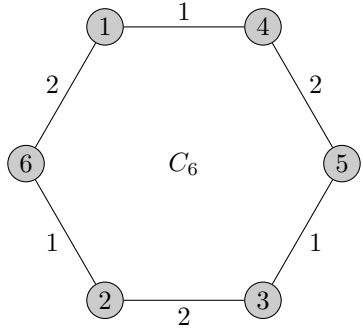
\begin{figure}
\centering
\begin{tikzpicture}
  [scale=0.7,auto=left]
  \node[circle,draw=black!100,fill=black!20,inner sep=2pt] (n1) at (0:3cm) {$5$};
  \node[circle,draw=black!100,fill=black!20,inner sep=2pt]  (n2) at (60:3cm) {$4$};
  \node[circle,draw=black!100,fill=black!20,inner sep=2pt] (n3) at (120:3cm) {$1$};
    \node[circle,draw=black!100,fill=black!20,inner sep=2pt] (n4) at (180:3cm) {$6$};
  \node[circle,draw=black!100,fill=black!20,inner sep=2pt]  (n5) at (240:3cm) {$2$}; 
  \node[circle,draw=black!100,fill=black!20,inner sep=2pt]  (n6) at (300:3cm) {$3$};

 \foreach \from/ \to in {n1/n2,n2/n3,n3/n4,n4/n5,n5/n6,n6/n1}
    \draw (\from) ->(\to);

\draw (0,2.9) node {$1$};
\draw (2.5,1.5) node {$2$};
\draw (2.5,-1.5) node {$1$};
\draw (0,-2.9) node {$2$};
\draw (-2.5,1.5) node {$2$};
\draw (-2.5,-1.5) node {$1$};
\draw (0,0) node {$C_6$};

\end{tikzpicture}
\caption{Realization of the matrix in Remark \ref{rmkperm}.}\label{esa2}
\end{figure}
\end{Rmk}

\begin{Rmk}\label{remarknodes} \rm The cases $s=1, s=n-1$ can be omitted since we would obtain two equal terms given by the unique monomial $d_{i\pi(i)}$ or $d_{i\pi^{n-1}(i)}$ and then the equation is trivially satisfied.
\end{Rmk}

Given a metric $D$ on $X$ with $|X|\geq 4$, we say that $D$ is {\it cyclelike} if there is a minimal realization for $D$ that is a cycle. This type of metric was discussed in, e.g., \cite{BR} ,\cite{ISPZ}, \cite{SP87}. In \cite{BR}, the authors characterize cyclelike metrics according to equation (\ref{eqngon}), after performing an algorithm that re-enumerate rows and columns of the metric. 

We want to mention a useful result from \cite{ISPZ}.

\begin{thm}[\cite{ISPZ}, Theorem 4.4]\label{cyclelike}
Suppose $D$ is a cyclelike metric on a finite set $X$ and a cycle $C$ is a minimal realization of  $D$ with $V(C)=X=\{v_1, v_2, \dots, v_m\}$, $m\geq 4$ and $E(C)=\{\{v_i,v_{i+1}\}\, : \, 1\leq i \leq m\}$, where the indices are taken modulo $m$. Then, $C$ is an optimal realization of $D$ if and only if
\begin{equation}\label{eqoptcycle}
D(v_{i-1},v_i)+D(v_i,v_{i+1})=D(v_{i-1},v_{i+1}) 
\end{equation}
holds for all $i$. In this case, $C$ is the unique optimal realization of $D$.
\end{thm}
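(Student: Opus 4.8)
The plan is to establish the two directions of the biconditional separately, and then obtain uniqueness by analysing the equality case. First I would fix notation and make a preliminary reduction. Write $L:=W(C)$ for the total weight of $C$ and, for $1\le i\le m$, write $w_i$ for the weight of the edge $\{v_i,v_{i+1}\}$ (indices mod $m$), so that $\sum_{i=1}^m w_i=L$. Since $C$ is a minimal realization, no edge of $C$ may be deleted without destroying the realizing property; but if $w_i\ge\tfrac12 L$ for some $i$, then for every pair $v_j,v_k$ the arc of $C$ through the edge $\{v_i,v_{i+1}\}$ is at least as long as the complementary one, so cutting $C$ at that edge produces a path that still realizes $D$ and has strictly smaller total weight, contradicting minimality. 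Hence $w_i<\tfrac12 L$ for all $i$, and in $C$ one has $D(v_i,v_{i+1})=w_i$ and $D(v_{i-1},v_{i+1})=\min\{w_{i-1}+w_i,\ L-w_{i-1}-w_i\}$; in particular the equality (\ref{eqoptcycle}) at index $i$ is equivalent to the inequality $w_{i-1}+w_i\le\tfrac12 L$.

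The direction ``$C$ optimal $\Rightarrow$ (\ref{eqoptcycle}) holds for all $i$'' I would prove by contraposition. Suppose $w_{i-1}+w_i>\tfrac12 L$ for some $i$. I would ``pull the vertex $v_i$ off the cycle'': delete the two edges $\{v_{i-1},v_i\}$ and $\{v_i,v_{i+1}\}$, adjoin a new (Steiner) vertex $u$, and insert edges $\{v_{i-1},u\}$, $\{u,v_{i+1}\}$, $\{u,v_i\}$ of weights $\tfrac12 L-w_i$, $\tfrac12 L-w_{i-1}$ and $w_{i-1}+w_i-\tfrac12 L$ respectively, all strictly positive by $w_{i-1},w_i<\tfrac12 L$ and the assumed failure of (\ref{eqoptcycle}). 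Comparing, for every pair of vertices of $X$, the routes available in the modified graph against the two arcs of $C$, one checks that the modified graph still realizes $D$; the only delicate point is that no distance strictly decreases, which holds because the new path from $v_{i-1}$ through $u$ to $v_{i+1}$ has length $L-w_{i-1}-w_i$, precisely the length of the arc of $C$ avoiding $v_i$. The modified graph has total weight $\tfrac32 L-(w_{i-1}+w_i)$, which is strictly less than $L$ since $w_{i-1}+w_i>\tfrac12 L$; so $C$ is not optimal. I expect this direction to be routine.

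The converse, ``(\ref{eqoptcycle}) holds for all $i$ $\Rightarrow$ $C$ is optimal'', is the substantial part: one must show $W(G)\ge L$ for every realization $(G,w_G)$ of $D$. The plan is to fix, for each $i$, a shortest $v_i$--$v_{i+1}$ path $P_i$ in $G$, so that $\sum_i w_G(P_i)=\sum_i w_i=L$, and to arrange that each edge of $G$ is used by at most one of the $P_i$, whence $W(G)\ge\sum_i w_G(P_i)=L$. The lever is (\ref{eqoptcycle}) in the form $w_{i-1}+w_i\le\tfrac12 L$: whenever a contiguous arc $v_a,v_{a+1},\dots,v_b$ of $C$ has total weight at most $\tfrac12 L$, one has $D(v_a,v_b)=\sum_{t=a}^{b-1}w_t$, so the concatenation $P_a\cup\cdots\cup P_{b-1}$ is a geodesic of $G$ and may be taken to be a simple path; in particular those legs are pairwise edge-disjoint. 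Assembling these local edge-disjointness statements into a global one --- the obstruction being arcs of $C$-weight exceeding $\tfrac12 L$ --- is the delicate step; alternatively, one may try to derive $W(G)\ge L$ from the cut / linear-programming lower bounds for optimal realizations available in the literature. I regard establishing this lower bound as the main obstacle of the whole argument.

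For uniqueness, assume (\ref{eqoptcycle}) and let $(G,w_G)$ be any optimal realization, so $W(G)=L$. Then every inequality used in the lower bound above must be tight; tracing them backwards forces each $P_i$ to be a single edge, these $m$ edges to be pairwise distinct, and their union to exhaust $G$, so $G$ is exactly the cycle through $v_1,\dots,v_m$ with edge weights $w_1,\dots,w_m$, that is $G=C$.
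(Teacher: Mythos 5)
You should first note that the paper itself offers no proof of this statement: it is imported verbatim from \cite{ISPZ} (Theorem 4.4), so your attempt can only be measured against what such a proof actually requires. The preparatory part of your proposal is sound: minimality does force $w_i<\tfrac12 L$ for every edge, and with that the equality (\ref{eqoptcycle}) at $i$ is indeed equivalent to $w_{i-1}+w_i\le\tfrac12 L$. Your contrapositive argument for ``$C$ optimal $\Rightarrow$ (\ref{eqoptcycle})'' is also correct and is exactly the elementary cycle reduction of \cite{HY} that the authors themselves use in the proof of Proposition \ref{optimal}: pulling $v_i$ off the cycle onto a new vertex $u$ with the three weights you list preserves all distances and lowers the total weight to $\tfrac32 L-(w_{i-1}+w_i)<L$.

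The genuine gap is the converse, which is where the content of the theorem lies. You reduce ``(\ref{eqoptcycle}) $\Rightarrow$ optimal'' to the claim that in an arbitrary realization $G$ one can choose shortest paths $P_1,\dots,P_m$ (with $P_i$ joining $v_i$ to $v_{i+1}$) that are pairwise edge-disjoint, but your local argument only gives edge-disjointness of legs lying on a common arc of $C$-weight at most $\tfrac12 L$ --- essentially consecutive legs; for legs on opposite sides of the cycle, condition (\ref{eqoptcycle}) gives no direct control, and you yourself flag the passage from the local statements to the global one (or the appeal to unspecified cut/linear-programming bounds) as the main obstacle. That missing step is precisely the nontrivial part of \cite{ISPZ}, so as written you have proved only one implication. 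The uniqueness claim inherits the same gap, and it has an additional flaw: tightness of $W(G)\ge\sum_i w_G(P_i)$ only forces every edge of $G$ to lie on some $P_i$, not that each $P_i$ is a single edge. For instance, subdividing one edge of $C$ by an auxiliary degree-two vertex yields a different graph with the same metric on $X$ and the same total weight; excluding it requires the standing convention that auxiliary vertices of a realization have degree at least three, which your sketch never invokes.
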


It is not true, in general, that a metric $D$ satisfying the conditions of Theorem \ref{n-agono}, must satisfy also equations  (\ref{eqoptcycle}), where the order of the indices is given by the permutation $\pi$. However, this becomes true when we add an extra condition on $D$, thus we have the following result.

\begin{Prop}\label{optimal}
Let $D$ be a distance matrix such that ${\bf a}_D$ is the null vector. If $D$ satisfies the conditions of Theorem \ref{n-agono}, then the realization of $D$ by a cycle $C$ is optimal.
\end{Prop}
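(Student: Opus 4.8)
The plan is to show that the hypothesis ${\bf a}_D={\bf 0}$ forces the cycle realization produced by Theorem~\ref{n-agono} to satisfy equation~(\ref{eqoptcycle}) at every vertex, and then to conclude by Theorem~\ref{cyclelike}. So first I would fix a real permutation $\pi$ of $[n]$ witnessing~(\ref{eqngonreal}) and write out the associated realization: $C$ is the $n$-cycle whose vertices in cyclic order are $v_k=\pi^k(1)$, $k=0,\dots,n-1$, with edge weights $w_k=d_{v_kv_{k+1}}$ (indices modulo $n$) and perimeter $W=\sum_k w_k$. Applying~(\ref{eqngonreal}) with $i=v_{k-1}$ and $s=2$ gives $d_{v_{k-1}v_{k+1}}=\min\{w_{k-1}+w_k,\;W-(w_{k-1}+w_k)\}$, while $d_{v_{k-1}v_k}=w_{k-1}$ and $d_{v_kv_{k+1}}=w_k$ by construction; hence equation~(\ref{eqoptcycle}) at the vertex $v_k$ is equivalent to the single scalar inequality $w_{k-1}+w_k\le W/2$. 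Everything therefore reduces to deriving this inequality, for each $k$, from ${\bf a}_D={\bf 0}$.

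This is the core of the argument. Fix $k$. Since $a_{v_k}=0$ and every term in the definition of $a_{v_k}$ is nonnegative by the triangle inequality, there are indices $p,r\ne v_k$ (and $p\ne r$, excluding the degenerate case of zero distances) with $d_{pv_k}+d_{v_kr}=d_{pr}$. I would then show, by a short analysis of distances on a cycle, that such an additive equality can hold only when $v_k$ lies in the interior of a shortest $p$--$r$ arc $A$: parametrizing $p,r,v_k$ by arclength along $C$, one checks that if $v_k$ lay in the interior of the longer arc the equality would fail, unless the two arcs have equal length $W/2$, in which case $v_k$ still lies interior to a shortest arc. Now $A$, being a shortest arc, has length $d_{pr}\le W/2$; and, $v_k$ being interior to $A$, the path $A$ contains both edges incident to $v_k$, namely $\{v_{k-1},v_k\}$ and $\{v_k,v_{k+1}\}$. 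Hence $w_{k-1}+w_k\le(\text{length of }A)\le W/2$, as required.

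It then remains to feed this into Theorem~\ref{cyclelike}. The inequalities $w_{k-1}+w_k\le W/2$ force $w_k<W/2$ for every $k$ (once zero-weight edges are excluded, as we may), so each edge of $C$ is the unique geodesic between its endpoints; consequently no edge can be deleted, no weight lowered, and no vertex removed (all vertices lie in $X$) without altering $D$, so $C$ is a minimal realization of $D$ and $D$ is cyclelike. Since, in addition, (\ref{eqoptcycle}) holds at every vertex, Theorem~\ref{cyclelike} applies and shows that $C$ is the (unique) optimal realization of $D$.

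The step I expect to be the main obstacle is the second paragraph: translating the combinatorial identity $a_{v_k}=0$ into the metric inequality $w_{k-1}+w_k\le W/2$ needs the case distinction on which arc of $C$ the vertex $v_k$ sits in, with care taken for the boundary case of two equally long $p$--$r$ arcs and for pairs $p,r$ adjacent to $v_k$. The minimality check in the last paragraph is routine but should be written out so that the small-$n$ and zero-weight degeneracies are visibly harmless.
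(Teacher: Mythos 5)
Your proposal is correct, but it follows a genuinely different route from the paper's proof. The paper argues by contradiction: if the cycle $C$ were not optimal, Theorem \ref{cyclelike} would give a vertex $v_i$ at which (\ref{eqoptcycle}) fails strictly; then (\ref{eqngonreal}) identifies $D(v_{i-1},v_{i+1})$ with the length of the complementary arc, and an elementary cycle reduction in the sense of \cite{HY} produces a pendant edge of weight $\tfrac12\left(D(v_{i-1},v_i)+D(v_i,v_{i+1})-D(v_{i-1},v_{i+1})\right)>0$, which is read off as the $i$-th compaction entry, contradicting ${\bf a}_D={\bf 0}$. You instead prove the implication directly and vertex by vertex: from $a_{v_k}=0$ you take a witnessing pair $p,r$ with $d_{pv_k}+d_{v_kr}=d_{pr}$, show by the arc-length case analysis that $v_k$ must lie interior to a shortest $p$--$r$ arc, deduce $w_{k-1}+w_k\le W/2$, hence (\ref{eqoptcycle}) at $v_k$, and then invoke the positive direction of Theorem \ref{cyclelike}. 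Both proofs hinge on Theorem \ref{cyclelike} and on the incompatibility of a strict violation of (\ref{eqoptcycle}) with a vanishing compaction entry, but your version buys two things: it checks explicitly the hypotheses of Theorem \ref{cyclelike} (minimality of $C$, hence that $D$ is cyclelike), which the paper leaves implicit, and your arc analysis is precisely the justification needed to pass from ``the term of the minimum corresponding to the pair $(v_{i-1},v_{i+1})$ is positive'' to ``$a_{v_i}>0$'', a step the paper compresses into the assertion that this weight ``is exactly'' the $i$-th compaction entry. The paper's version, in exchange, is shorter, outsourcing the cycle geometry to the reduction lemma of \cite{HY}. As you yourself note, you should still dispose of the degenerate cases (zero off-diagonal distances or zero-weight edges, and small $n$: Theorem \ref{cyclelike} needs $m\ge 4$, while for $n=3$ the hypothesis ${\bf a}_D={\bf 0}$ forces $D$ to be the zero matrix), since the statement as given does not exclude them.
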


\begin{proof}
For simplicity of notation, we assume that $D$ satisfies equations (\ref{eqngonreal}) with respect to the permutation $\pi=(2,3,\dots, n-1,n,1)$. 

Suppose that the realization $C$ is not optimal, then, according to Theorem \ref{cyclelike} there are vertices $v_{i-1}, v_i, v_{i+1}$ such that
\begin{equation}\label{contrast}
D(v_{i-1},v_{i+1}) < D(v_{i-1},v_i)+D(v_i,v_{i+1}).
\end{equation}
Since $D$ satisfies the conditions of Theorem \ref{n-agono} we must have
\[
D(v_{i-1},v_{i+1})=D(v_{i+1},v_{i+2})+D(v_{i+2},v_{i+3})+\cdots+D(v_{i-2},v_{i-1}),
\]
where the indices are taken modulo $n$.
Let us add an edge between nodes $v_{i-1}$ and $v_{i+1}$ of weight $W:=D(v_{i+1},v_{i+2})+D(v_{i+2},v_{i+3})+\cdots+D(v_{i-2},v_{i-1})$. We can then apply an elementary cycle reduction (see \cite{HY}) obtaining a new node $w$ and the following set of weighted edges
\begin{itemize}
    \item[$\bullet$] the edge $(v_{i-1},w)$ of weight $\frac{1}{2}\left(D(v_{i-1},v_{i})+W-D(v_{i},v_{i+1}\right))$
    \item[$\bullet$] the edge $(v_i,w)$ of weight $\frac{1}{2}\left(D(v_{i-1},v_{i})+D(v_{i},v_{i+1})-W\right)$
    \item[$\bullet$] the edge $(v_{i+1},w)$ of weight $\frac{1}{2}\left(D(v_{i+1},v_{i})+W-D(v_{i-1},v_{i}\right))$
\end{itemize}

By (\ref{contrast}), the edge $(v_i,w)$ has  strictly positive weight and this number is exactly the $i-$th entry of the compaction vector ${\bf a}_D$, which is a contradiction.
\end{proof}

The application of Theorem \ref{n-agono} requires, in principle, to check all possible $n!$ permutations, but this  is intractable also for $n$ of  moderate size. However, the following algorithm permits to find an expected permutation $\pi$ in $O(n^2)$ time. 
\vskip0.2cm
INPUT: a distance matrix $D$ of order $n$ 
\begin{itemize}
    \item[(1)] set $\pi_n=1$ and $L=\{1\}$
    \item[(2)] choose the two minimal entries $D(1,i)$ and $D(1, j)$ in the first row (if more, choose two of them);
    \item[(3)] set $\pi_2=i$, $\pi_{n-1}=j$ and $L=\{1, i,j\}$
    \item[(4)] for $s$ from 2 to $n-2$ do
    \begin{itemize}
        \item[(4.1)] choose minimal entry $D(\pi_s,k)$ in $\pi_s$-th row with $k\notin L$ (if more, choose one of them);
        \item[(4.2)] set $\pi_{s+1}=k$ and $L:=L\cup \{k\}$
    \end{itemize}
\item[(5)] if $D(\pi_{n-1},j)$ is one of the two minimal entries in $\pi_{n-1}$-th row, then $\pi$ is the expected permutation
\end{itemize}

\section{Algorithm of realization of a matrix by a graph of genus 1} \label{AL}

The idea of the algorithm is to iterate the processes of compaction and reduction, obtaining at every step $i$ a new matrix $D(i)$. The algorithm will stop when, at a certain step $t$, we will be in one of the following cases
\begin{itemize}
\item[(1)] the matrix $D(t)$  has order 2;
\item[(2)] the compaction matrix $D({\mathbf a}_{D(t)})$ is the null matrix;
\item[(3)] the compaction vector  ${\mathbf a}_{D(t)}$ is the null vector and neither cases (1) or (2) are verified.
\end{itemize}

If we have kept track of all compaction vectors ${\mathbf a} _ {D(i)}$ and of all the rows and columns eliminated in the matrices $D ({\mathbf a} _ {D(i)})$, by going backwards, we can construct the graph realizing the starting matrix $D$, from the  graph $G(t)$ which realizes the last matrix $D(t)$, adding  at  every step $t-i$, with $1 \leq i \leq t$, new distinct nodes with edges adjacent to the graph $G_ {t-i + 1}$.

From Propositions \ref{proprank2} and \ref{propstar} we know that in cases (1) and (2), the matrix $D_t$ is realizable by a tree and hence, as we have just said, we will get that $D$ is realizable by a tree. Finally, if  we are in case (3) and we verify, by  Theorem \ref{n-agono},  that $D(t)$ is realizable by an $m-$cycle, with $m \leq n$ then the  matrix $D$ can be realized by a graph of genus 1. 

We divide the algorithm into two parts: a first part of analysis, which determines if or not a distance matrix can be realized by a graph of genus 1 and then, a part of reconstruction which, using data from the analysis part, constructs the desired graph of genus 1.
\vspace{0.9cm}

{\bf {\large Algorithm}}
\vspace{0.5cm}

{\bf INPUT:} a distance matrix  $D$ of order $n$.
\vspace{0.5cm}

{\underline {\bf Analysis}}:
\vskip0.3cm
\begin{itemize}
\item[{\bf Step 0:}] Let $t=0$, $D(t)=D$, $\Theta=\{1, 2, \dots, n\}$ and $\rho=n$.
\item[{\bf Step 1:}] Compute the compaction vector ${\mathbf a}_{D(t)}$ of $D(t)$.

 \noindent If ${\mathbf a}_{D(t)}$ is the null vector go to  {\bf Step 4}.
\item[{\bf Step 2:}] Compute the compaction matrix $D({\mathbf a}_{D(t)})$ of $D(t)$.
If  $D({\mathbf a}_D(t))$ is the zero matrix, then, by Proposition \ref{propstar}, $D(t)$ is realizable by a tree $G(V,E)$ and go to {\bf Step 5}.
\item[{\bf Step 3:}] Let
{\small{\[
S(t)=\Big\{ \big\{i_{1,1}(t),i_{1,2}(t), \dots, i_{1,s_1(t)}(t)\big\},\dots, \big\{i_{\theta(t),1}(t),i_{\theta(t),2}(t), \dots, i_{\theta(t),s_\theta(t)}(t)\big\}  \Big\}
\]}}
\noindent be the collection of all distinct (ordered) subset of $\Theta$ of indices of equal rows of $D({\mathbf a}_{D(t)})$, with $|S(t)|=\theta(t)$, 
and let 
\[
S'(t)=\big\{j_{1}(t),j_{2}(t), \dots, j_{\sigma(t)}(t)\big\}
\]
 be the set of (ordered) indices of $\Theta$ not in any subset of $S(t)$, with $|S'(t)|=\sigma(t)$.
 \vskip0.2cm

For $k$ from 1 to $\theta(t)$ do
\begin{enumerate}[leftmargin=2.3cm]
\item[{\bf Step 3.1:}] Remove from $D(t)$ all rows and columns indexed by $i_{{k,2}}(t)$, $i_{{k,3}}(t)$, $\dots  i_{{k,s_k(t)}}(t)$
\item[{\bf Step 3.2:}] Relabel $i_{k,1}(t)$-th row and column of $D(t)$ by $\rho+k$
\item[{\bf Step 3.3:}] Set 
\[
\Theta:=\Big(\Theta\cup\{\rho+k\}\Big)\setminus \big\{ i_{k,1}(t),i_{k,2}(t),\dots, i_{k,s_k}(t)\big\}.
\]
\end{enumerate}
\vskip0.2cm

For $k$ from 1 to $\sigma(t)$ do
\begin{enumerate}[leftmargin=2.3cm]
\item[{\bf Step 3.4:}] Relabel $j_{k}(t)$-th row and column of $D(t)$ by $\rho+\theta(t)+k$
\item[{\bf Step 3.5:}] Set 
\[
\Theta:=\Big(\Theta\cup\{\rho+\theta(t)+k\}\Big)\setminus \big\{ j_{k}(t)\big\}.
\]
\end{enumerate}

We get the reduction matrix $\hat{D}({\mathbf a}_{D(t)})$ with a new labelling of rows and columns.

\noindent If  $\hat{D}({\mathbf a}_{D(t)})$ has order 2 then, by Proposition \ref{proprank2},  $D(t)$ is realizable by a tree $G(V,E)$ and go to {\bf Step 5}.

\noindent Otherwise set
\[
\begin{array}{l}
\rho:=\rho+\theta(t)+\sigma(t),\\
\\
t:=t+1,\\
\\
D(t):=\hat{D}({\mathbf a}_{D(t-1)})
\end{array}
\]
and return to {\bf Step 1}.
\item[{\bf Step 4:}] Check if $D(t)$ is realizable by an $m-$cycle $G(V,E)$ with $m=|\Theta|$. If so, then $D$ can be realized by a graph of genus 1 and go to {\bf Step 5}. 

\noindent Otherwise, $D$ has not a realization by a tree or a graph of genus 1 then EXIT.
\end{itemize}
\vspace{0.5cm}

{\underline {\bf Reconstruction}}:
\vskip0.3cm
\begin{itemize}
\item[{\bf Step 5}] For  $\tau$ from $t-1$ to 0 do

\noindent Set $\rho:=\rho-\theta(\tau)-\sigma(\tau)$
\begin{enumerate}[leftmargin=1cm]
\item[{\bf Step 5.1:}]  for $\kappa$ from 1 to $\sigma(\tau)$ do
\vskip0.1cm
If $({\mathbf a}_{D(\tau)})_{j_{\kappa}(\tau)}\not=0$ then
\vskip0.1cm
\hspace{0.1cm} add node: $V:=V\cup \big\{j_{\kappa}(\tau) \big\}$
\vskip0.1cm
\hspace{0.1cm} 
add weighted edge: $E:=E\cup \big\{(\rho+\theta(\tau)+\kappa,j_{\kappa}(\tau),({\mathbf a}_{D(\tau)})_{j_{\kappa}(\tau)}) \big\}$
\vskip0.1cm
Else 
\vskip0.1cm
\hspace{0.1cm} replace node: $V:=\big(V\setminus\{\rho+\theta(\tau)+k\}\big) \cup \big\{j_{\kappa}(\tau) \big\}$
\vskip0.1cm

\item[{\bf Step 5.2:}]  for $\kappa$ from 1 to $\theta(\tau)$ do
\vskip0.1cm
Add nodes: $
V:=V\cup \big\{i_{\kappa,1}(\tau) ,i_{\kappa,2}(\tau),\dots,  i_{\kappa,s_\kappa(\tau)}(\tau) \big\}$
\vskip0.1cm
Add weighted edges:
{\small{\[
E:=E\cup \big\{(\rho+\kappa,i_{\kappa,1}(\tau),({\mathbf a}_{D(\tau)})_{i_{\kappa,1}(\tau)}) ,\dots,  (\rho+\kappa,i_{\kappa,s_\kappa}(\tau),({\mathbf a}_{D(\tau)})_{i_{\kappa,s_\kappa(\tau)}(\tau)}) \big\}
\]}}
\end{enumerate}
\end{itemize}

\vskip0.2cm
 {\bf OUTPUT:} the graph $G$ realizing $D$. 
\vskip0.5cm

\begin{Rmk}\rm
It is easy to observe that, in the algorithm, the equal rows in a compaction matrix will correspond to nodes which are adjacent to the same interior node.
\end{Rmk}

\begin{Rmk}\rm
The algorithm has been implemented in \texttt{Maple}${}^\mathrm{TM}$. Here, the stored data of compaction vectors and reduction matrices are used to build the weighted adjacency matrix of the graph which realizes the distance matrix $D$.
The algorithm can be found in the ancillary file \texttt{algorithm.mw}.% in the arXiv version of this paper.
\end{Rmk}

The study of recursive compaction vectors is equivalent to removing all pendant trees in the optimal realization. In such a procedure, the number of operations grows in a time complexity of $O(n^4)$, where $n$ is the order of the distance matrix $D$. As a matter of fact, computing an entry of the compaction vector is done in a time complexity of $O(n^2)$. This must be done for the $n$ entries of the compaction vector. Moreover, we need to compute the compaction vector for the successive steps of the algorithm. The number of steps is at most $n$.
As for storage requirement, at the end of each step we store compaction vectors and reduction matrices, this is done in time complexity of $O(n^3)$.
\vskip0.2cm

We conclude this section with a result about optimality of this realization.

\begin{Prop}
If the algorithm outputs a graph of genus 1 realizing $D$, then this realization is optimal.
\end{Prop}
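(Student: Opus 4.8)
The plan is to argue by downward induction along the steps of the algorithm, reversing the reconstruction phase one stage at a time. Write $D = D(0), D(1),\dots,D(t)$ for the matrices produced in the analysis phase, so that $D(\tau+1) = \hat{D}_{{\mathbf a}_{D(\tau)}}$ at each stage $\tau$, and $G(t), G(t-1),\dots,G(0) = G$ for the graphs produced in the reconstruction phase, where $G(\tau)$ is the realization of $D(\tau)$ built by the reconstruction (so $G(0) = G$ is the realization of $D$ of the statement). For a distance matrix $N$, let $w^*(N)$ denote the minimum total weight of a realization of $N$. I would prove, by induction on $\tau$, that $w(G(\tau)) = w^*(D(\tau))$; the case $\tau = 0$ is the assertion.

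For the base case $\tau = t$: since the output has genus $1$, the algorithm exited through Step 4, so ${\mathbf a}_{D(t)}$ is the null vector and $G(t)$ is an $m$-cycle $C$ realizing $D(t)$; by Theorem \ref{n-agono} this cycle exhibits a real permutation for which $D(t)$ satisfies (\ref{eqngonreal}), and since the compaction vector vanishes, Proposition \ref{optimal} gives that $C$ is optimal, i.e.\ $w(G(t)) = w^*(D(t))$. For the inductive step, assume $w(G(\tau+1)) = w^*(D(\tau+1))$. By Steps 5.1--5.2, $G(\tau)$ is obtained from $G(\tau+1)$ by attaching, for each index $i$ of $D(\tau)$, a pendant edge of weight $({\mathbf a}_{D(\tau)})_i$ to a prescribed node of $G(\tau+1)$ (a zero-weight edge meaning an identification of nodes), so that
\[
w(G(\tau)) = w(G(\tau+1)) + \sum_i ({\mathbf a}_{D(\tau)})_i = w^*(D(\tau+1)) + \sum_i ({\mathbf a}_{D(\tau)})_i .
\]
Recalling $D(\tau+1) = \hat{D}_{{\mathbf a}_{D(\tau)}}$ and that $G(\tau)$ is a realization of $D(\tau)$, it then suffices to establish the lower bound
\begin{equation}\label{eqoptkey}
w^*(D(\tau)) \ge \sum_i ({\mathbf a}_{D(\tau)})_i + w^*\big(\hat{D}_{{\mathbf a}_{D(\tau)}}\big),
\end{equation}
since this forces $w(G(\tau)) \le w^*(D(\tau)) \le w(G(\tau))$, hence equality.

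To prove (\ref{eqoptkey}) I would isolate two elementary facts about compaction, applied to $M = D(\tau)$ with compaction vector $(a_1,\dots,a_n)$. First, the quantity $M_{pj}+M_{jr}-M_{pr}$ (for $p,r \ne j$) is unchanged when $M$ is replaced by $M - a_l E_l$ with $l\ne j$; hence every compaction value is invariant under compaction at the other indices, and by Theorem \ref{thmHak} the compaction matrix $M - \sum_l a_l E_l = D({\mathbf a}_M)$ is itself a distance matrix, reached by performing the $n$ single compactions in succession. Second, given any realization $H$ of $M$ and an index $i$ with $a_i > 0$, the graph $H'$ obtained from $H$ by decreasing the weight of every edge incident to $i$ by $a_i$ is a realization of $M - a_i E_i$ with $w(H') \le w(H) - a_i$: the shortenings are legitimate because each edge $\{i,v\}$ has weight at least $d_H(i,v) = M_{iv} \ge a_i$ (the case $p=r$ in the definition of $a_i$), while $H'$ realizes precisely $M - a_i E_i$ because --- $a_i$ being positive --- no shortest path in $H$ between two vertices distinct from $i$ runs through $i$, so exactly the distances to $i$ change, each dropping by $a_i$. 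Iterating the second fact over all $n$ indices (the relevant $a_i$ never changing, by the first fact) gives $w^*(D({\mathbf a}_M)) \le w^*(M) - \sum_i a_i$. Finally, the passage to the reduction is free: indices in a common block of equal rows of $D({\mathbf a}_M)$ have a zero off-diagonal entry and so coincide in every realization, whence $w^*(D({\mathbf a}_M)) = w^*(\hat{D}_{{\mathbf a}_M})$. Combining these inequalities and equalities yields (\ref{eqoptkey}).

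The part I expect to need the most care is the second elementary fact --- converting the metric statement of Theorem \ref{thmHak}, that $M - a_i E_i$ is still a distance matrix, into the graph statement that a given realization can be made cheaper by $a_i$ while realizing \emph{exactly} $M - a_i E_i$. The two points to verify are that every edge at $i$ is long enough to be shortened by $a_i$ and that doing so creates no shortcut between two vertices other than $i$; both reduce to the inequalities $a_i \le M_{pi}$ and $2a_i \le M_{pi}+M_{ir}-M_{pr}$, but should be written out. Granting that local move, the rest is the bookkeeping of the relabellings performed in Step 3 and the pendant edges added in Step 5, together with the two results already available, Proposition \ref{optimal} and Theorem \ref{n-agono}, for the base case.
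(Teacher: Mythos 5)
Your overall architecture (optimality of the terminal cycle via Theorem \ref{n-agono} and Proposition \ref{optimal}, then propagation of optimality backwards through the reconstruction) is the same as the paper's; the difference is that the paper simply invokes the quoted theorem of Hakimi and Yau (\cite{HY}, Theorem 5) to conclude that attaching a pendant edge of weight $a\le a_i$ to an optimal realization of $D_i(a)$ gives an optimal realization of $D$, whereas you try to prove the needed lower bound $w^*(D(\tau))\ge \sum_i({\mathbf a}_{D(\tau)})_i+w^*(\hat{D}_{{\mathbf a}_{D(\tau)}})$ from scratch. That substitute argument has a genuine gap, exactly at the point you flagged. Your ``second elementary fact'' is false as stated: a realization $H$ of $M$ may contain auxiliary (unlabelled) vertices adjacent to $i$, and for such a neighbour $v$ the chain ``weight of $\{i,v\}\ \ge\ d_H(i,v)=M_{iv}\ge a_i$'' breaks down, because $M_{iv}$ is not an entry of the matrix and no lower bound by $a_i$ is available. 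Concretely, for the metric on three points with all distances $2$ one has $a_1=1$, yet the realization consisting of an auxiliary vertex $u$ joined to $1$ by an edge of weight $\tfrac12$ and to $2$ and $3$ by edges of weight $\tfrac32$, together with the edge $\{2,3\}$ of weight $2$, realizes $M$ while its unique edge at $1$ is shorter than $a_1$; shrinking every edge at $1$ by $a_1$ produces a negative weight. So the local move cannot be applied to ``any realization'' of $M$.

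Restricting to an optimal realization does not repair this by itself: one must first prove that in a (suitably normalized) optimal realization every edge incident to $i$ has weight at least $a_i$ when $a_i>0$ (equivalently, that $i$ may be assumed pendant with pendant weight at least $a_i$), and this needs a further structural or perturbation argument (sliding weight off an auxiliary neighbour of $i$ unless that neighbour lies on a shortest path between two other labelled points, suppressing degree-two auxiliary vertices, and so on). That is precisely the content of the Hakimi--Yau theorem the paper cites, so your route ends up having to reprove it rather than bypass it; as written, the inequality $w^*(M-a_iE_i)\le w^*(M)-a_i$ is not established. A smaller inaccuracy: your ``first elementary fact'' is not literally true, since for $p=r=l$ the quantity $M_{pj}+M_{jr}-M_{pr}=2M_{lj}$ does change under $M\mapsto M-a_lE_l$; the compaction value at $j$ is therefore not invariant, it is merely still $\ge a_j$, and this uses $a_l+a_j\le M_{lj}$, which follows by adding $2a_l\le M_{jl}+M_{lk}-M_{jk}$ and $2a_j\le M_{kj}+M_{jl}-M_{kl}$ for a third index $k$ (so it requires order at least $3$ and should be stated, together with Theorem \ref{thmHak}, when you iterate the compactions). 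The remaining bookkeeping --- the base case, the identification of indices with equal rows, and the weight count for Step 5 --- is fine and matches the paper.
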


\begin{proof}
In the analysis part, the algorithm checks if the matrix $D(t)$ satisfies equations (\ref{eqngonreal}) at Step 4, which is reached if the compaction vector of $D(t)$ is the null vector. Hence, by Proposition \ref{optimal}, if $D(t)$ has a realization by a cycle $C$, this realization is optimal.

Successively, the reconstruction part of the algorithm reconstructs the graph $G$ realizing $D$, starting from $C$ and adding, recursively,  pendant edges. Hence, by the following theorem, the realization of $D$  is optimal.
\end{proof}

\begin{thm}[\cite{HY}, Theorem 5]
If $0\leq a \leq a_i$ and if $G_i(a)$ is an optimal realization of $D_i(a)$, then $G$ obtained from $G_i(a)$ adding the vertex $v'_i$ to $G_i(a)$ and the edge $(v'_i,v_i)$ of weight $a$, is an optimal realization of $D$.
\end{thm}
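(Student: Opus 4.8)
The plan is to reduce the statement to a single inequality between the minimal realization weights of $D$ and of $D_i(a)$, and then to prove that inequality by a local metric surgery performed on an \emph{arbitrary} realization of $D$. Throughout, write $w(\cdot)$ for the total edge-weight of a weighted graph and $W^\ast(\cdot)$ for the minimal weight of a realization of a given distance matrix. First I would dispose of the easy half: the graph $G$ obtained from $G_i(a)$ by attaching the pendant edge $(v'_i,v_i)$ of weight $a$ is itself a realization of $D$, with the new leaf $v'_i$ playing the role of index $i$, since $d_G(v'_i,v_j)=a+(d_{ij}-a)=d_{ij}$ for $j\neq i$ while all distances among the remaining marked vertices are untouched. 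Hence $W^\ast(D)\le w(G)=w(G_i(a))+a=W^\ast(D_i(a))+a$, using that $G_i(a)$ is optimal. It then suffices to establish the reverse inequality $W^\ast(D)\ge W^\ast(D_i(a))+a$: together with the displayed realization $G$ of weight exactly $W^\ast(D_i(a))+a$, this forces $G$ to attain the minimum and be optimal.

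The core step is to show that from any realization $H$ of $D$ one can manufacture a realization $H'$ of $D_i(a)$ with $w(H')\le w(H)-a$. Regarding $H$ as a geodesic metric graph (subdividing edges so that every point at distance exactly $a$ from the vertex $v_i$ representing index $i$ becomes a vertex), I would let $B=\{x:\,d_H(v_i,x)\le a\}$ be the closed ball of radius $a$ about $v_i$, and form $H'$ by deleting the interior of $B$ and identifying the whole boundary sphere $\{x:\,d_H(v_i,x)=a\}$ to a single new vertex $v'_i$, re-attaching to $v'_i$ the outer stub of each edge crossing the sphere. Two facts are then routine. First, for each marked $j\neq i$ a shortest $v_i$–$v_j$ path crosses the sphere at distance $a$ and its outer portion, of length $d_{ij}-a$, survives in $H'$, while no shorter route is possible since every boundary point $x$ satisfies $d_H(x,v_j)\ge d_{ij}-d_H(v_i,x)=d_{ij}-a$; thus $d_{H'}(v'_i,v_j)=d_{ij}-a$. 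Second, the deleted part has total length at least $a$: taking $p=r$ in the definition of the compaction value gives $a\le a_i\le d_{pi}$, so every marked vertex lies at distance $\ge a$ from $v_i$ and hence a full radius-$a$ segment issues from $v_i$ into $B$, whence $w(H')\le w(H)-a$.

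The main obstacle is the remaining verification, that the collapse preserves the distances among the marked vertices $p,r\neq i$, and this is exactly where the hypothesis $a\le a_i$ is indispensable. Any $p$–$r$ route in $H'$ passing through $v'_i$ has length at least $(d_{pi}-a)+(d_{ir}-a)=d_{pi}+d_{ir}-2a\ge d_{pr}$, the last inequality being the defining estimate $2a\le 2a_i\le d_{pi}+d_{ir}-d_{pr}$, i.e. precisely the condition of Theorem \ref{thmHak}. Conversely, any $p$–$r$ geodesic of $H$ must avoid the open ball: were it to reach a point at distance $\delta<a$ from $v_i$, one would get $d_{pr}\ge d_{pi}+d_{ir}-2\delta>d_{pi}+d_{ir}-2a_i\ge d_{pr}$, a contradiction. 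Such a geodesic therefore persists intact in $H'$, giving $d_{H'}(p,r)=d_{pr}$, so $H'$ indeed realizes $D_i(a)$.

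Putting the pieces together yields $W^\ast(D_i(a))\le w(H')\le w(H)-a$ for every realization $H$ of $D$, which is exactly $W^\ast(D)\ge W^\ast(D_i(a))+a$; combined with the easy half this proves $G$ optimal. The degenerate case $a=0$ (where $B=\{v_i\}$, $H'=H$ and $D_i(0)=D$) is trivial, so the argument is uniform in $0\le a\le a_i$. I expect the only delicate point, requiring care in the write-up, to be the no-shortcut claim of the third paragraph, since it is the sole place the compaction bound is consumed and the sole place where the metric (rather than purely combinatorial) structure of $H$ is needed.
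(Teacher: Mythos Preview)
The paper does not supply its own proof of this statement: it is quoted as Theorem~5 of \cite{HY} and invoked as a black box in the proof of the preceding proposition. There is therefore no in-paper argument to compare against.

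Your proposal is correct. The reduction to the single inequality $W^\ast(D)\ge W^\ast(D_i(a))+a$ is the natural strategy, and the ball-collapse surgery is exactly the right construction: given any realization $H$ of $D$, excising the open radius-$a$ ball about $v_i$ and identifying its boundary sphere to a single point yields a realization of $D_i(a)$ of weight at most $w(H)-a$. You correctly locate the one place where the hypothesis $a\le a_i$ is consumed, namely the no-shortcut verification for pairs $p,r\neq i$; the chain $d_{pr}\ge d_{pi}+d_{ir}-2\delta>d_{pi}+d_{ir}-2a_i\ge d_{pr}$ for a geodesic dipping to depth $\delta<a$ is clean and decisive. Two small technical points you pass over are worth one line each in a final write-up: first, the sphere $\{x:d_H(v_i,x)=a\}$ is finite in a finite metric graph (the distance function has slope $\pm 1$ along every edge), so the subdivision is legitimate; second, no two distinct marked vertices $j,k\neq i$ can both lie on that sphere unless $d_{jk}=0$, since $2a\le 2a_i\le d_{ji}+d_{ik}-d_{jk}$ would then force $d_{jk}\le 0$, so the collapse cannot spuriously identify labelled points. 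Neither affects the soundness of your argument.
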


\section{Examples}
The following   two examples  show how the algorithm works.
 
 \begin{Ex}\label{exalgex1}\rm
Consider the following matrix
\[
D=
\bordermatrix{ & {\mathbf 1}& {\mathbf 2}& {\mathbf 3} & {\mathbf 4} & {\mathbf 5} & {\mathbf 6}\cr
{\mathbf 1} &0 & 2 & 5 & 6 & 7 & 4 \cr
{\mathbf 2} & 2 & 0 & 5 & 6 & 7 & 4 \cr
{\mathbf 3}&5 & 5 & 0 & 4 & 5 &3\cr
{\mathbf 4}&6 & 6 & 4 & 0 & 3 & 2\cr
{\mathbf 5} &7 & 7 & 5 & 3 & 0 & 3\cr
{\mathbf 6}& 4 & 4 & 3 & 2 & 3 &0
}
\]
We set $t=0$, $D(0)=D$, $\Theta=\{1,2,3,4,5,6\}$ and $\rho=6$.
The compaction vector of $D(0)$ is ${\mathbf a}_{D(0)}=(1,1,\frac{3}{2},1,2,0)$. Since ${\mathbf a}_{D(0)}$ is not the zero vector we pass to {\bf Step 2} computing the compaction matrix
\[
D({\mathbf a}_{D(0)})=
\bordermatrix{ & {\mathbf 1} & {\mathbf 2} & {\mathbf 3} & {\mathbf 4} & {\mathbf 5} & {\mathbf 6}\cr
{\mathbf 1}& 0 & 0 & \frac{5}{2} & 4 & 4 & 3 \cr
{\mathbf 2} & 0 & 0 & \frac{5}{2} & 4 & 4 & 3 \cr
{\mathbf 3} & \frac{5}{2} & \frac{5}{2} & 0 & \frac{3}{2} & \frac{3}{2} &\frac{3}{2}\cr
{\mathbf 4} & 4 & 4 & \frac{3}{2} & 0 & 0 & 1\cr
{\mathbf 5} & 4 & 4 & \frac{3}{2} & 0 & 0 & 1\cr
{\mathbf 6} & 3 & 3 & \frac{3}{2} & 1 & 1 &0
}
\]
Since $D({\mathbf a}_{D(0)})$ is not the zero matrix we pass to {\bf Step 3}. Notice that rows 1 and 4 are respectively equal to rows 2 and 5. Hence $\theta(0)=2$, $\sigma(0)=2$ and
\[
S(0)=\Big\{ \big\{1,2 \big\},\big\{4,5\big\}  \Big\} \quad S'(0)=\big\{3,6 \big\}
\]

Hence, we remove rows (and columns) labeled by 2 and 5 and relabel 
\begin{itemize}
\item[$\bullet$] rows (and columns) 1 and 4 respectively by 7($=\rho+1$) and 8($=\rho+2=\rho+\theta(0)$),
\item[$\bullet$] rows (and columns) 3 and 6 respectively by 9($=\rho+\theta(0) +1$) and 10($=\rho+\theta(0)+2=\rho+\theta(0)+\sigma(0)$).
\end{itemize}
We get
\[
\Theta=\{1,2,3,4,5,6\}\cup\{7,8,9,10\}\setminus\{1,2,3,4,5,6\}=\{7,8,9,10\}
\]
and
\[
\hat{D}({\mathbf a}_{D(0)})=
\bordermatrix{ &  {\mathbf 7} & {\mathbf 9} & {\mathbf 8} & {\mathbf 10} \cr
{\mathbf 7} & 0 & \frac{5}{2} & 4 & 3 \cr
{\mathbf 9} &  \frac{5}{2} & 0 & \frac{3}{2}  &\frac{3}{2}\cr
{\mathbf 8} & 4 & \frac{3}{2}  & 0 & 1\cr
{\mathbf 10}&  3 & \frac{3}{2} & 1 &0
}
\]
Since $\hat{D}({\mathbf a}_{D(0)})$ has not order 2, we come back to {\bf Step 1} after setting $t=1$, $\rho=10$ and $D(1)=\hat{D}({\mathbf a}_{D(0)})$.

The compaction vector of $D(1)$ is ${\mathbf a}_{D(1)}=(2,0,\frac{1}{2},0)$ and again we pass to {\bf Step 2} computing the compaction matrix 
\[
D({\mathbf a}_{D(1)})=
\bordermatrix{ & {\mathbf7} & {\mathbf 9} & {\mathbf 8} & {\mathbf 10} \cr
{\mathbf 7} &0 & \frac{1}{2} & \frac{3}{2} & 1\cr
{\mathbf 9} &\frac{1}{2} & 0 & 1 & \frac{3}{2}\cr
{\mathbf 8} &\frac{3}{2} & 1 & 0 & \frac{1}{2}\cr
{\mathbf 10} &1 & \frac{3}{2} & \frac{1}{2} & 0
}
\]
This matrix has not equal rows so $\theta(1)=0$ and $\sigma(1)=4$ with $S'(1)=\{7,9,8,10\}$. After {\bf Step 3.4} and {\bf Step 3.5} we get $\Theta=\{ 11,12,13,14\}$ and
\[
\hat{D}({\mathbf a}_{D(1)})=
\bordermatrix{ & {\mathbf 11} & {\mathbf 12} & {\mathbf 13} & {\mathbf 14} \cr
{\mathbf 11} &0 & \frac{1}{2} & \frac{3}{2} & 1\cr
{\mathbf 12} &\frac{1}{2} & 0 & 1 & \frac{3}{2}\cr
{\mathbf 13} &\frac{3}{2} & 1 & 0 & \frac{1}{2}\cr
{\mathbf 14} &1 & \frac{3}{2} & \frac{1}{2} & 0
}
\]
Since it has not order 2, we set $t=2$, $\rho=14$ and $D(2)=\hat{D}({\mathbf a}_{D(1)})$.
 
Computing the compaction vector of $D(2)$ we get the null vector. Hence, in the analysis part of the algorithm  we move to {\bf Step 4}.
 
Notice that $D(2)$ satisfies Formula (\ref{eqngonreal}) of Theorem \ref{n-agono} for $\pi=(2,3,4,1)$ hence it is realizable by the $n-$cycle $G$ in Figure \ref{exalg1}(a) and, moreover, $D$ will be realizable by a graph of genus 1.

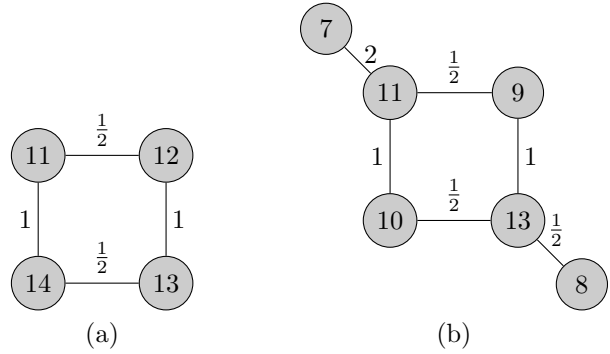
\begin{figure}
\centering
\begin{tabular}{ccc}
\begin{tikzpicture}
  [scale=0.85,auto=left]
  \node[circle,draw=black!100,fill=black!20,inner sep=3pt] (n1) at (0,2) {$11$};
  \node[circle,draw=black!100,fill=black!20,inner sep=3pt]  (n2) at (2,2) {$12$};
  \node[circle,draw=black!100,fill=black!20,inner sep=3pt]  (n3) at (2,0) {$13$};
  \node[circle,draw=black!100,fill=black!20,inner sep=3pt]  (n4) at (0,0) {$14$};
 
 \foreach \from/ \to in {n1/n2, n2/n3,n3/n4, n4/n1}
    \draw (\from) ->(\to);

\draw (1,2.4) node[scale=1] {$\frac{1}{2}$};
\draw (1,0.4) node[scale=1] {$\frac{1}{2}$};
\draw (-0.2,1) node[scale=1] {$1$};
\draw (2.2,1) node[scale=1] {$1$};

\end{tikzpicture} & $\qquad$ & 
\begin{tikzpicture}
  [scale=0.85,auto=left]
  \node[circle,draw=black!100,fill=black!20,inner sep=3pt] (n1) at (0,2) {$11$};
  \node[circle,draw=black!100,fill=black!20,inner sep=4pt]  (n2) at (2,2) {$9$};
  \node[circle,draw=black!100,fill=black!20,inner sep=3pt]  (n3) at (2,0) {$13$};
  \node[circle,draw=black!100,fill=black!20,inner sep=3pt]  (n4) at (0,0) {$10$};
  \node[circle,draw=black!100,fill=black!20,inner sep=4pt]  (n5) at (-1,3) {$7$};
  \node[circle,draw=black!100,fill=black!20,inner sep=4pt]  (n6) at (3,-1) {$8$};
 
 \foreach \from/ \to in {n1/n2, n2/n3,n3/n4, n4/n1, n1/n5, n3/n6}
    \draw (\from) ->(\to);

\draw (1,2.4) node[scale=1] {$\frac{1}{2}$};
\draw (1,0.4) node[scale=1] {$\frac{1}{2}$};
\draw (-0.2,1) node[scale=1] {$1$};
\draw (2.2,1) node[scale=1] {$1$};
\draw (-0.3,2.6) node[scale=1] {$2$};
\draw (2.6,-0.15) node[scale=1] {$\frac{1}{2}$};

\end{tikzpicture}\\
(a) && (b)
\end{tabular}
\caption{Intermediate steps for the realization of $D$ of Example \ref{exalgex1}.}\label{exalg1}
\end{figure}

We start now the reconstruction part of the algorithm to determine such a graph.
Since for $\tau=1$($=t-1$) one has $\theta(1)=0$ and $\sigma(1)=4$ we set $\rho=10$ and we perform only {\bf Step 5.1}. More in details, one has that
\begin{itemize}
\item[$\bullet$] since $({\mathbf a}_{D(1)})_{\mathbf 7}=2$  we add node $\{ 7 \}$ and edge $(11,7,2)$.
\item[$\bullet$] since $({\mathbf a}_{D(1)})_{\mathbf 9}=0$  we replace node  $\{ 12 \}$ by node $\{ 9 \}$.
\item[$\bullet$] since $({\mathbf a}_{D(1)})_{\mathbf 8}=\frac12$  we add node $\{ 8 \}$ and edge $(13,8,\frac12)$.
\item[$\bullet$] since $({\mathbf a}_{D(1)})_{\mathbf 10}=0$  we replace node  $\{ 14 \}$ by node $\{ 10 \}$.
\end{itemize}
Thus, we get the graph in Figure \ref{exalg1}(b).

Now we perform one more time {\bf Step 5} for $\tau=0$ (hence $\rho=10-\theta(0)-\sigma(0)=6$).
Performing {\bf Step 5.1} one has that
\begin{itemize}
\item[$\bullet$] since $({\mathbf a}_{D(0)})_{\mathbf 3}=\frac{3}{2}$  we add node $\{ 3 \}$ and edge $(9,3,\frac{3}{2})$.
\item[$\bullet$] since $({\mathbf a}_{D(0)})_{\mathbf 6}=0$  we replace node  $\{ 10 \}$ by node $\{ 6 \}$.
\end{itemize}
Performing {\bf Step 5.2} we add
\begin{itemize}
\item[$\bullet$] nodes $\{1\}$ and $\{2\}$ and  edges $(7,1,1)$ and $(7,2,1)$ 
\item[$\bullet$] nodes $\{4\}$ and $\{5\}$ and  edges $(8,4,1)$ and $(8,5,2)$ 
\end{itemize}
This  concludes the algorithm giving the graph of genus 1 in Figure \ref{exalg12}, which realizes $D$.

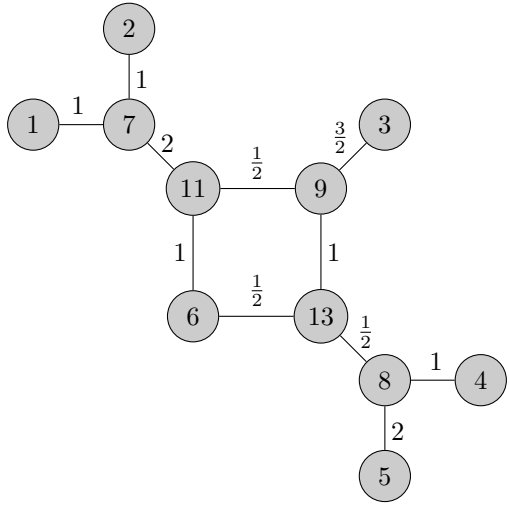
\begin{figure}
\centering
\begin{tikzpicture}
  [scale=0.85,auto=left]
  \node[circle,draw=black!100,fill=black!20,inner sep=3pt] (n1) at (0,2) {$11$};
  \node[circle,draw=black!100,fill=black!20,inner sep=4pt]  (n2) at (2,2) {$9$};
  \node[circle,draw=black!100,fill=black!20,inner sep=3pt]  (n3) at (2,0) {$13$};
  \node[circle,draw=black!100,fill=black!20,inner sep=4pt]  (n4) at (0,0) {$6$};
  \node[circle,draw=black!100,fill=black!20,inner sep=4pt]  (n5) at (-1,3) {$7$};
  \node[circle,draw=black!100,fill=black!20,inner sep=4pt]  (n6) at (3,-1) {$8$};
 \node[circle,draw=black!100,fill=black!20,inner sep=4pt]  (n7) at (-2.5,3) {$1$};
 \node[circle,draw=black!100,fill=black!20,inner sep=4pt]  (n8) at (-1,4.5) {$2$};
   \node[circle,draw=black!100,fill=black!20,inner sep=4pt]  (n9) at (3,3) {$3$};
 \node[circle,draw=black!100,fill=black!20,inner sep=4pt]  (n10) at (4.5,-1) {$4$};
  \node[circle,draw=black!100,fill=black!20,inner sep=4pt]  (n11) at (3,-2.5) {$5$};
  
 \foreach \from/ \to in {n1/n2, n2/n3,n3/n4, n4/n1, n1/n5, n3/n6, n5/n8, n5/n7, n2/n9, n6/n10, n6/n11}
    \draw (\from) ->(\to);

\draw (1,2.4) node[scale=1] {$\frac{1}{2}$};
\draw (1,0.4) node[scale=1] {$\frac{1}{2}$};
\draw (-0.2,1) node[scale=1] {$1$};
\draw (2.2,1) node[scale=1] {$1$};
\draw (-0.4,2.7) node[scale=1] {$2$};
\draw (2.7,-0.25) node[scale=1] {$\frac{1}{2}$};

\draw (-1.8,3.3) node[scale=1] {$1$};
\draw (-0.8,3.7) node[scale=1] {$1$};
\draw (2.3,2.8) node[scale=1] {$\frac{3}{2}$};
\draw (3.8,-0.7) node[scale=1] {$1$};
\draw (3.2,-1.8) node[scale=1] {$2$};    
    
\end{tikzpicture}
\caption{Graph of  genus  1 realizing matrix $D$ of Example \ref{exalgex1}.}\label{exalg12}
\end{figure}
\end{Ex}

\begin{Ex}\label{exalgex2}\rm
Consider the following matrix
\[
D=
\bordermatrix{ 
& {\mathbf 1}& {\mathbf 2}& {\mathbf 3} & {\mathbf 4} & {\mathbf 5} & {\mathbf 6} & {\mathbf 7} & {\mathbf 8} &{\mathbf 9}\cr
{\mathbf 1} &0 & 4 & 2 & 6 & 4 & 6 & 2 &2 & 5 \cr
{\mathbf 2} & 4 & 0 & 4 & 4 & 4 & 4 & 4 &4 & 4 \cr
{\mathbf 3}&2 & 4 & 0 & 6 & 4 &6 & 2 &2 & 5\cr
{\mathbf 4}&6 & 4 & 6 & 0 & 5 & 2 & 6 & 6 & 4\cr
{\mathbf 5} &4 & 4 & 4 & 5 & 0 & 5 &4 & 4 & 3\cr
{\mathbf 6}& 6 & 4 & 6 & 2 &5  &0 & 6 & 6 & 4\cr
{\mathbf 7}& 2 & 4 & 2 & 6 & 4 & 6 & 0 & 2 & 5\cr
{\mathbf 8}& 2 & 4 & 2 & 6 & 4 & 6 & 2 & 0 & 5\cr
{\mathbf 9}& 5& 4& 5& 4&3& 4& 5& 5& 0
}
\]
We set $t=0$, $D(0)=D$, $\Theta=\{1,2,3,4,5,6,7,8,9\}$ and $\rho=9$.
The compaction vector of $D(0)$ is ${\mathbf a}_{D(0)}=(1,1,1,1,1,1,1,1,1)$. Since ${\mathbf a}_{D(0)}$ is not the zero vector we pass to {\bf Step 2} computing the compaction matrix
\[
D({\mathbf a}_{D(0)})=
\bordermatrix{ 
& {\mathbf 1}& {\mathbf 2}& {\mathbf 3} & {\mathbf 4} & {\mathbf 5} & {\mathbf 6} & {\mathbf 7} & {\mathbf 8} &{\mathbf 9}\cr
 {\mathbf 1}&0 & 2 &0 & 4 & 2 &4 & 0 &0&3   \cr
 {\mathbf 2}&2 & 0 & 2 & 2 &2 & 2 & 2& 2 &2\cr
 {\mathbf 3}&0 & 2 &0 & 4 & 2 &4 & 0 &0&3   \cr
 {\mathbf 4}&4 & 2 & 4 & 0 & 3 & 0 & 4 & 4 & 2\cr
 {\mathbf 5}&2 & 2 & 2 & 3& 0 & 3 & 2 & 2 &1\cr
 {\mathbf 6}&4 & 2 & 4 & 0 & 3 & 0 & 4 & 4 & 2\cr
 {\mathbf 7}&0 & 2 &0 & 4 & 2 &4 & 0 &0 &3   \cr
 {\mathbf 8}&0 & 2 &0 & 4 & 2 &4 & 0 &0 &3 \cr
 {\mathbf 9}&3 & 2 &3 & 2 & 1 &2 & 3 &3 &0 \cr }
\]
Since $D({\mathbf a}_{D(0)})$ is not the zero matrix we pass to {\bf Step 3}. Here we have
\[
S(0)=\Big\{ \big\{1,3,7,8 \big\},\big\{4,6\big\}  \Big\} \quad S'(0)=\big\{2,5,9 \big\},
\]
\[
\theta(0)=2 \qquad \sigma(0)=3.
\]
Hence, we remove rows (and columns) labeled by 3, 7, 8 and 6 and relabel 
\begin{itemize}
\item[$\bullet$] rows (and columns) 1 and 4 respectively by 10($=\rho+1$) and 11($=\rho+2=\rho+\theta(0)$),
\item[$\bullet$] rows (and columns) 2,5 and 9 respectively by 12($=\rho+\theta(0) +1$), 13 and 14($=\rho+\theta(0)+3=\rho+\theta(0)+\sigma(0)$).
\end{itemize}
We get
\[
\Theta(t)=\{10,11,12,13,14\}
\]
and 
\[
\hat{D}({\mathbf a}_{D(0)})=\bordermatrix{ & {\mathbf 10} & {\mathbf  12} &  {\mathbf 11} & {\mathbf 13} &  {\mathbf 14}\cr
 {\mathbf 10} & 0 & 2 & 4 & 2& 3\cr
 {\mathbf 12} & 2 & 0 & 2 & 2& 2\cr
 {\mathbf 11}&  4 & 2 & 0 &3 & 2\cr
 {\mathbf 13} & 2 & 2 & 3& 0 & 1\cr
 {\mathbf 14}&3 & 2 & 2 & 1 &0
 }
\]
Since $\hat{D}({\mathbf a}_{D(0)})$ has not order 2, we come back to {\bf Step 1} after setting $t=1$, $\rho=14$ and $D(1)=\hat{D}({\mathbf a}_{D(0)})$.

The compaction vector of $D(1)$ is ${\mathbf a}_{D(1)}=(1,0,1,0,0)$ and again we pass to {\bf Step 2} computing the compaction matrix 
\[
D({\mathbf a}_{D(1)})=
\bordermatrix{ & {\mathbf 10} & {\mathbf 12} & {\mathbf 11} & {\mathbf 13} & {\mathbf 14} \cr
{\mathbf 10} &0 & 1 & 2 &1 & 2\cr
{\mathbf 12} &1& 0 & 1 &2 & 2\cr
{\mathbf 11} &2 & 1 & 0 & 2 & 1\cr
{\mathbf 13} &1 & 2&2 & 0 &1\cr
 {\mathbf 14}&2 & 2 & 1 & 1 &0
}
\]
This matrix has not equal rows so $\theta(1)=0$ and $\sigma(1)=5$ with 
\[
S'(1)=\{10,12,11,13,14\}.
\]
After {\bf Step 3.4} and {\bf Step 3.5} we get $\Theta=\{ 15,16,17,18,19\}$ and
\[
\hat{D}({\mathbf a}_{D(1)})=
\bordermatrix{ & {\mathbf 15} & {\mathbf 16} & {\mathbf 17} & {\mathbf 18} & {\mathbf 19} \cr
{\mathbf 15} &0 & 1 & 2 &1 & 2\cr
{\mathbf 16} &1& 0 & 1 &2 & 2\cr
{\mathbf 17} &2 & 1 & 0 & 2 & 1\cr
{\mathbf 18} &1 & 2&2 & 0 &1\cr
 {\mathbf 19}&2 & 2 & 1 & 1 &0
}
\]
Since it has not order 2, we set $t=2$, $\rho=19$ and $D(2)=\hat{D}({\mathbf a}_{D(1)})$.
 
Computing the compaction vector of $D(2)$ we get the null vector. Hence, in the analysis part of the algorithm  we move to {\bf Step 4}.
 
Notice that $D(2)$ satisfies Formula (\ref{eqngonreal}) of Theorem \ref{n-agono} for $\pi=(4,5,3,2,1)$ hence it is realizable by the $5$-cycle $G$  in Figure \ref{exalg2}(a) and, moreover, $D$ will be realizable by a graph of genus 1.

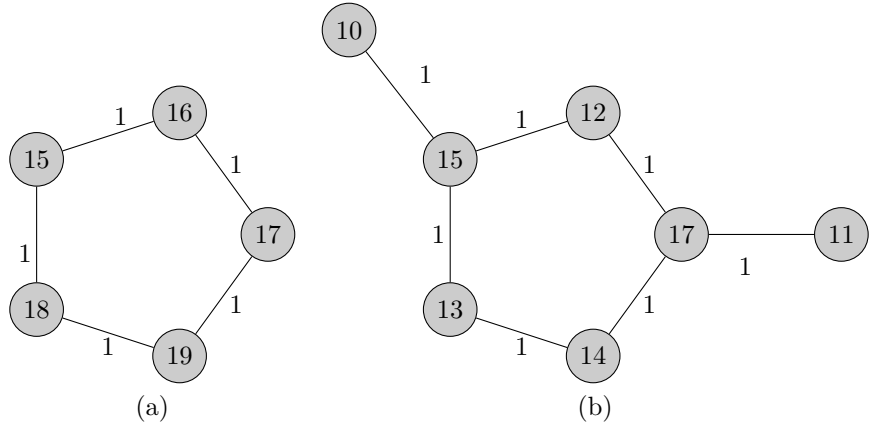
\begin{figure}
\centering
\begin{tabular}{cc}
\begin{tikzpicture}
  [scale=0.85,auto=left]
  \node[circle,draw=black!100,fill=black!20,inner sep=3pt] (n1) at (0:2cm) {$17$};
  \node[circle,draw=black!100,fill=black!20,inner sep=3pt]  (n2) at (72:2cm) {$16$};
  \node[circle,draw=black!100,fill=black!20,inner sep=3pt]  (n3) at (144:2cm) {$15$};
  \node[circle,draw=black!100,fill=black!20,inner sep=3pt]  (n4) at (216:2cm) {$18$};
  \node[circle,draw=black!100,fill=black!20,inner sep=3pt]  (n5) at (288:2cm) {$19$};
 
 \foreach \from/ \to in {n1/n2, n2/n3,n3/n4, n4/n5,n5/n1}
    \draw (\from) ->(\to);

\draw (1.5,1.1) node[scale=1] {$1$};
\draw (1.5,-1.1) node[scale=1] {$1$};
\draw (-0.3,1.85) node[scale=1] {$1$};
\draw (-0.5,-1.75) node[scale=1] {$1$};
\draw (-1.8,-0.3) node[scale=1] {$1$};    
    
\end{tikzpicture} &  
\begin{tikzpicture}
  [scale=0.85,auto=left]

  \node[circle,draw=black!100,fill=black!20,inner sep=3pt] (n1) at (0:2cm) {$17$};
  \node[circle,draw=black!100,fill=black!20,inner sep=3pt]  (n2) at (72:2cm) {$12$};
  \node[circle,draw=black!100,fill=black!20,inner sep=3pt]  (n3) at (144:2cm) {$15$};
  \node[circle,draw=black!100,fill=black!20,inner sep=3pt]  (n4) at (216:2cm) {$13$};
  \node[circle,draw=black!100,fill=black!20,inner sep=3pt]  (n5) at (288:2cm) {$14$};

 \node[circle,draw=black!100,fill=black!20,inner sep=3pt]  (n6) at (-3.2,3.2) {$10$};
  \node[circle,draw=black!100,fill=black!20,inner sep=3pt]  (n7) at (4.5,0) {$11$};

 \foreach \from/ \to in {n1/n2, n2/n3,n3/n4, n4/n5,n5/n1, n3/n6, n1/n7} 
    \draw (\from) ->(\to);

\draw (1.5,1.1) node[scale=1] {$1$};
\draw (1.5,-1.1) node[scale=1] {$1$};
\draw (-0.5,1.8) node[scale=1] {$1$};
\draw (-0.5,-1.75) node[scale=1] {$1$};
\draw (-1.8,0) node[scale=1] {$1$};  
    
\draw (-2,2.5) node[scale=1] {$1$};
\draw (3,-0.5) node[scale=1] {$1$};

\end{tikzpicture}\\
(a) & (b)
\end{tabular}
\caption{Intermediate steps for the realization of $D$ of Example \ref{exalgex2}.}\label{exalg2}
\end{figure}

We start now the reconstruction part of the algorithm to determine such a graph.
Since for $\tau=1$($=t-1$) one has $\theta(1)=0$ and $\sigma(1)=5$ we set $\rho=14$ and we perform only {\bf Step 5.1}. More in details, one has that
\begin{itemize}
\item[$\bullet$] since $({\mathbf a}_{D(1)})_{\mathbf 10}=1$  we add node $\{ 10 \}$ and edge $(15,10,1)$.
\item[$\bullet$] since $({\mathbf a}_{D(1)})_{\mathbf 12}=0$  we replace node  $\{ 16 \}$ by node $\{ 12 \}$.
\item[$\bullet$] since $({\mathbf a}_{D(1)})_{\mathbf 11}=1$  we add node $\{ 11 \}$ and edge $(17,11,1)$.
\item[$\bullet$] since $({\mathbf a}_{D(1)})_{\mathbf 13}=0$  we replace node  $\{ 18 \}$ by node $\{ 13 \}$.
\item[$\bullet$] since $({\mathbf a}_{D(1)})_{\mathbf 14}=0$  we replace node  $\{ 19 \}$ by node $\{ 14 \}$.
\end{itemize}
Thus, we get the graph in Figure \ref{exalg2}(b).

Now, we perform one more time {\bf Step 5} for $\tau=0$ (hence $\rho=14-\theta(0)-\sigma(0)=9$).
Performing {\bf Step 5.1} one has that
\begin{itemize}
\item[$\bullet$] since $({\mathbf a}_{D(0)})_{\mathbf 2}=1$  we add node $\{ 2 \}$ and edge $(12,2,1)$.
\item[$\bullet$] since $({\mathbf a}_{D(0)})_{\mathbf 5}=1$  we add node $\{ 5 \}$ and edge $(13,5,1)$.
\item[$\bullet$] since $({\mathbf a}_{D(0)})_{\mathbf 9}=1$  we add node $\{ 9 \}$ and edge $(14,9,1)$.

\end{itemize}
Performing {\bf Step 5.2} we add
\begin{itemize}
\item[$\bullet$] nodes $\{1\}$, $\{3\}$,$\{7\}$ and $\{8\}$  and  edges $(10,1,1)$, $(10,3,1)$ $(10,7,1)$ and $(10,8,1)$ 
\item[$\bullet$] nodes $\{4\}$ and $\{6\}$ and  edges $(11,4,1)$ and $(11,6,1)$ 
\end{itemize}
This concludes the algorithm giving the graph of genus 1 in Figure \ref{exalg22}, which realizes $D$.

\begin{figure}
\centering
\begin{tikzpicture}
  [scale=0.85,auto=left]
 
 \node[circle,draw=black!100,fill=black!20,inner sep=3pt] (n1) at (0:2cm) {$17$};
  \node[circle,draw=black!100,fill=black!20,inner sep=3pt]  (n2) at (72:2cm) {$12$};
  \node[circle,draw=black!100,fill=black!20,inner sep=3pt]  (n3) at (144:2cm) {$15$};
  \node[circle,draw=black!100,fill=black!20,inner sep=3pt]  (n4) at (216:2cm) {$13$};
  \node[circle,draw=black!100,fill=black!20,inner sep=3pt]  (n5) at (288:2cm) {$14$};

 \node[circle,draw=black!100,fill=black!20,inner sep=3pt]  (n6) at (-3.2,3.2) {$10$};
  \node[circle,draw=black!100,fill=black!20,inner sep=3pt]  (n7) at (4.5,0) {$11$};

 \node[circle,draw=black!100,fill=black!20,inner sep=4pt]  (n8) at (-4.9,1.7) {$1$};
 \node[circle,draw=black!100,fill=black!20,inner sep=4pt]  (n9) at (-5.2,4) {$3$};
 \node[circle,draw=black!100,fill=black!20,inner sep=4pt]  (n10) at (-3.4,5.5) {$7$};
  \node[circle,draw=black!100,fill=black!20,inner sep=4pt]  (n11) at (-1.3,4.8) {$8$};
  
    \node[circle,draw=black!100,fill=black!20,inner sep=4pt]  (n12) at (1.7,3.6) {$2$};
 \node[circle,draw=black!100,fill=black!20,inner sep=4pt]  (n13) at (6,1.5) {$4$};
  \node[circle,draw=black!100,fill=black!20,inner sep=4pt]  (n14) at (6,-1.5) {$6$};
  \node[circle,draw=black!100,fill=black!20,inner sep=4pt]  (n15) at (-3,-2.6) {$5$};
  \node[circle,draw=black!100,fill=black!20,inner sep=4pt]  (n16) at (1.1,-3.65) {$9$};
  
   \foreach \from/ \to in {n1/n2, n2/n3,n3/n4, n4/n5,n5/n1, n3/n6,n6/n8, n6/n9,n6/n10, n6/n11, n12/n2, n1/n7,n7/n13,n7/n14,n4/n15,n5/n16}
    \draw (\from) ->(\to);

 \draw (1.5,1.1) node[scale=1] {$1$};
\draw (1.5,-1.1) node[scale=1] {$1$};
\draw (-0.5,1.85) node[scale=1] {$1$};
\draw (-0.5,-1.8) node[scale=1] {$1$};
\draw (-1.8,0) node[scale=1] {$1$};  
    
\draw (-2.1,2.3) node[scale=1] {$1$};

\draw (-4.3,2.6) node[scale=1] {$1$};
\draw (-4.05,3.9) node[scale=1] {$1$};
\draw (-3.1,4.5) node[scale=1] {$1$};
\draw (-2.2,3.7) node[scale=1] {$1$};
\draw (0.9,2.9) node[scale=1] {$1$};
\draw (3.3,0.3) node[scale=1] {$1$};
\draw (5,0.9) node[scale=1] {$1$};
\draw (5,-0.9) node[scale=1] {$1$};

\draw (-2.4,-1.6) node[scale=1] {$1$};
\draw (1,-2.6) node[scale=1] {$1$};
  
\end{tikzpicture}
\caption{Graph of  genus  1 realizing matrix $D$ of Example \ref{exalgex2}.}\label{exalg22}
\end{figure}
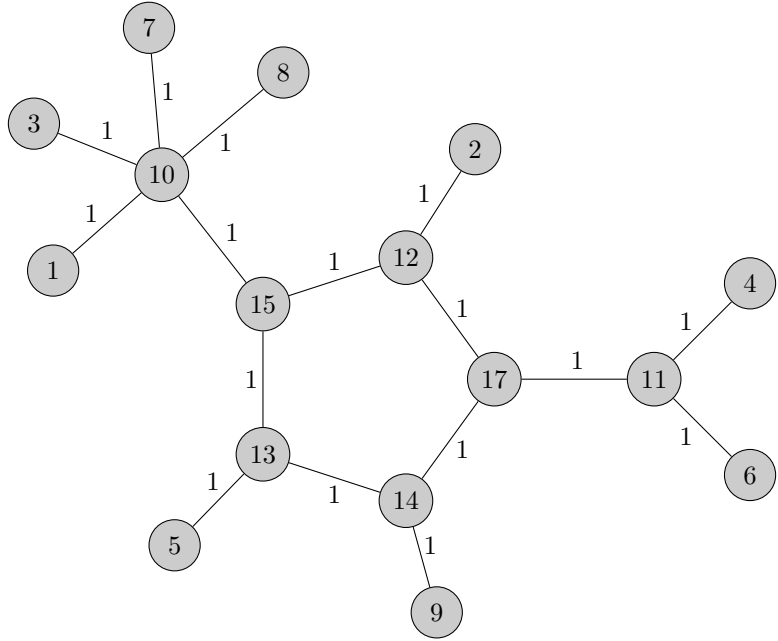
\end{Ex}

We end this section with some examples performed by our algorithm in Maple${}^\mathrm{TM}$. Here, starting from some weighted graphs of genus 1 we computed their distance matrices and we used them, as input, for the algorithm.
The Figures \ref{maple1} - \ref{maple4} show the original graph (on the left) and the output of the algorithm in Maple${}^\mathrm{TM}$(on the right).

\begin{figure}
\centering
\begin{tabular}{cc}
\begin{tikzpicture}
  [scale=0.5,auto=left]
   \node[circle,draw=black!100,fill=black!20,inner sep=7pt,scale=0.4] (n1) at (0:2.2cm) {};
  \node[circle,draw=black!100,fill=black!20,inner sep=7pt,scale=0.4]  (n2) at (60:2.2cm) {};
  \node[circle,draw=black!100,fill=black!20,inner sep=7pt,scale=0.4]  (n3) at (120:2.2cm) {};
  \node[circle,draw=black!100,fill=black!20,inner sep=7pt,scale=0.4]  (n4) at (180:2.2cm) {};
  \node[circle,draw=black!100,fill=black!20,inner sep=7pt,scale=0.4]  (n5) at (240:2.2cm) {};
  \node[circle,draw=black!100,fill=black!20,inner sep=7pt,scale=0.4]  (n6) at (300:2.2cm) {};

 \node[circle,draw=black!100,fill=black!20,inner sep=2pt,scale=0.5] (n7) at (0:4cm) {$1$};
  \node[circle,draw=black!100,fill=black!20,inner sep=2pt,scale=0.5]  (n8) at (60:4cm) {$2$};
  \node[circle,draw=black!100,fill=black!20,inner sep=2pt,scale=0.5]  (n9) at (120:4cm) {$3$};
  \node[circle,draw=black!100,fill=black!20,inner sep=2pt,scale=0.5]  (n10) at (180:4cm) {$4$};
  \node[circle,draw=black!100,fill=black!20,inner sep=2pt,scale=0.5]  (n11) at (240:4cm) {$5$};
  \node[circle,draw=black!100,fill=black!20,inner sep=2pt,scale=0.5]  (n12) at (300:4cm) {$6$};

   \foreach \from/ \to in {n1/n2, n2/n3,n3/n4, n4/n5,n5/n6, n1/n6,n1/n7, n2/n8,n3/n9, n4/n10, n5/n11, n6/n12}
    \draw (\from) ->(\to);

 \draw (6:3.1cm) node[scale=0.5] {$1$};
 \draw (66:3.1cm) node[scale=0.5] {$1$};
 \draw (114:3.1cm) node[scale=0.5] {$\frac{1}{2}$};
 \draw (171:3.1cm) node[scale=0.5] {$1$};
\draw (248:3.1cm) node[scale=0.5] {$1$};
 \draw (294:3.1cm) node[scale=0.5] {$\frac{1}{2}$};
 
  \draw (30:2.2cm) node[scale=0.5] {$2$};
  \draw (90:2.2cm) node[scale=0.5] {$\frac{3}{2}$};
  \draw (150:2.2cm) node[scale=0.5] {$\frac{3}{2}$};
  \draw (210:2.2cm) node[scale=0.5] {$2$};
  \draw (270:2.2cm) node[scale=0.5] {$\frac{3}{2}$};
  \draw (330:2.2cm) node[scale=0.5] {$\frac{3}{2}$};
\end{tikzpicture} &

\includegraphics[width=4.5cm]{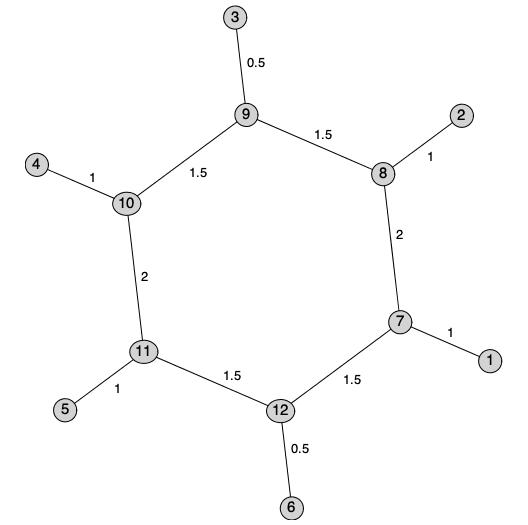}

\end{tabular}
\caption{}\label{maple1}
\end{figure}

\begin{figure}
\centering
\begin{tabular}{cc}
\begin{tikzpicture}
  [scale=0.45,auto=left]
   \node[circle,draw=black!100,fill=black!20,inner sep=7pt,scale=0.4] (n1) at (0:2.2cm) {};
  \node[circle,draw=black!100,fill=black!20,inner sep=7pt,scale=0.4]  (n2) at (72:2.2cm) {};
  \node[circle,draw=black!100,fill=black!20,inner sep=7pt,scale=0.4]  (n3) at (144:2.2cm) {};
  \node[circle,draw=black!100,fill=black!20,inner sep=7pt,scale=0.4]  (n4) at (216:2.2cm) {};
  \node[circle,draw=black!100,fill=black!20,inner sep=7pt,scale=0.4]  (n5) at (288:2.2cm) {};

 \node[circle,draw=black!100,fill=black!20,inner sep=2pt,scale=0.5] (n6) at (0:4cm) {$1$};
  \node[circle,draw=black!100,fill=black!20,inner sep=2pt,scale=0.5]  (n7) at (72:4cm) {$6$};
  \node[circle,draw=black!100,fill=black!20,inner sep=2pt,scale=0.5]  (n8) at (144:4cm) {$5$};
  \node[circle,draw=black!100,fill=black!20,inner sep=2pt,scale=0.5]  (n9) at (216:4cm) {};
  \node[circle,draw=black!100,fill=black!20,inner sep=2pt,scale=0.5]  (n10) at (288:4cm) {$2$};

  \node[circle,draw=black!100,fill=black!20,inner sep=2pt,scale=0.5]  (n11) at (-4.8,-1.4) {$4$};
  \node[circle,draw=black!100,fill=black!20,inner sep=2pt,scale=0.5]  (n12) at (-3,-4) {$3$};

   \foreach \from/ \to in {n1/n2, n2/n3,n3/n4, n4/n5,n5/n1, n1/n6,n2/n7, n3/n8, n4/n9, n5/n10,n9/n11,n9/n12}
    \draw (\from) ->(\to);

 \draw (6:3.1cm) node[scale=0.5] {$2$};
\draw (78:3.1cm) node[scale=0.5] {$1$};
\draw (150:3.1cm) node[scale=0.5] {$1$};
 \draw (222:3.1cm) node[scale=0.5] {$\frac{1}{2}$};
\draw (294:3.1cm) node[scale=0.5] {$1$};

  \draw (36:2.2cm) node[scale=0.5] {$3$};
  \draw (108:2.2cm) node[scale=0.5] {$2$};
  \draw (180:2.2cm) node[scale=0.5] {$1$};
  \draw (252:2.2cm) node[scale=0.5] {$2$};
  \draw (324:2.2cm) node[scale=0.5] {$3$};
  \draw (-3.3,-3.2) node[scale=0.5] {$\frac{5}{2}$};
  \draw (-4,-2.2) node[scale=0.5] {$\frac{3}{2}$};

\end{tikzpicture} &
\includegraphics[width=4.5cm]{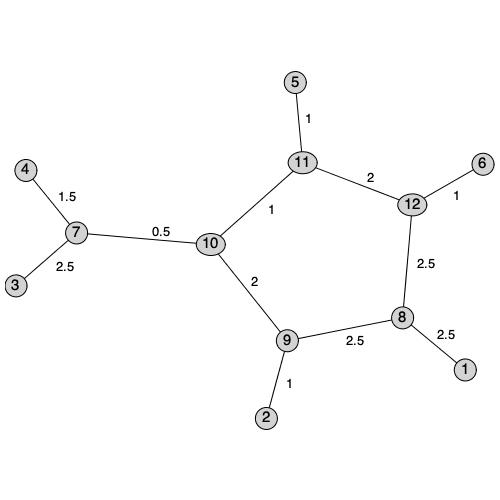}
\\
(a) & (b)
\end{tabular}
\caption{}\label{maple2}
\end{figure}

\begin{figure}
\centering
\begin{tabular}{cc}
\begin{tikzpicture}
  [scale=0.5,auto=left]
   \node[circle,draw=black!100,fill=black!20,inner sep=7pt,scale=0.4] (n1) at (45:2.2cm) {};
  \node[circle,draw=black!100,fill=black!20,inner sep=7pt,scale=0.4]  (n2) at (135:2.2cm) {};
  \node[circle,draw=black!100,fill=black!20,inner sep=7pt,scale=0.4]  (n3) at (225:2.2cm) {};
  \node[circle,draw=black!100,fill=black!20,inner sep=7pt,scale=0.4]  (n4) at (315:2.2cm) {};

   \node[circle,draw=black!100,fill=black!20,inner sep=2pt,scale=0.5] (n5) at (45:4cm) {$6$};
  \node[circle,draw=black!100,fill=black!20,inner sep=2pt,scale=0.5]  (n6) at (135:4cm) {$1$};
  \node[circle,draw=black!100,fill=black!20,inner sep=7pt,scale=0.4]  (n7) at (225:4cm) {};
  \node[circle,draw=black!100,fill=black!20,inner sep=2pt,scale=0.5]  (n8) at (315:4cm) {$5$};
 
  \node[circle,draw=black!100,fill=black!20,inner sep=2pt,scale=0.5]  (n9) at (0,-5) {$4$};
  \node[circle,draw=black!100,fill=black!20,inner sep=7pt,scale=0.4]  (n10) at (-4.2,-2.7) {};
  \node[circle,draw=black!100,fill=black!20,inner sep=2pt,scale=0.5]  (n11) at (-4,-4.7) {$2$};
  \node[circle,draw=black!100,fill=black!20,inner sep=2pt,scale=0.5]  (n12) at (-5,-1) {$3$};

 \foreach \from/ \to in {n1/n2, n2/n3,n3/n4, n4/n1, n1/n5, n2/n6, n3/n7, n4/n8,n7/n9, n7/n10, n10/n11,n10/n12}
    \draw (\from) ->(\to);

\draw (0:1.8cm) node[scale=0.5] {$2$};
\draw (90:1.8cm) node[scale=0.5] {$2$};
\draw (180:1.8cm) node[scale=0.5] {$2$};
\draw (270:1.8cm) node[scale=0.5] {$2$};

\draw (50:3cm) node[scale=0.5] {$1$};
\draw (130:3cm) node[scale=0.5] {$3$};
\draw (230:3cm) node[scale=0.5] {$1$};
\draw (310:3cm) node[scale=0.5] {$3$};

\draw (-1,-3.8) node[scale=0.5] {$1$};
\draw (-3.5,-2.4) node[scale=0.5] {$\frac{1}{2}$};
\draw (-3.9,-3.7) node[scale=0.5] {$\frac{5}{2}$};
\draw (-4.4,-1.8) node[scale=0.5] {$\frac{3}{2}$};

\end{tikzpicture} &
 \includegraphics[width=5cm]{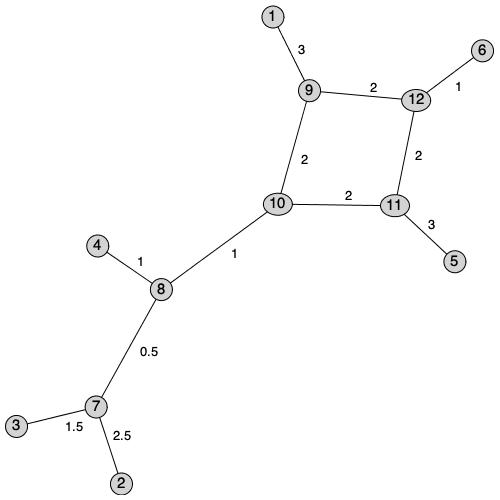}
\\
(a) & (b)
\end{tabular}
\caption{}\label{maple3}
\end{figure}

\begin{figure}
\centering
\begin{tabular}{cc}
\begin{tikzpicture}
  [scale=0.45,auto=left]
   \node[circle,draw=black!100,fill=black!20,inner sep=6pt,scale=0.4] (n1) at (0:2.2cm) {};
  \node[circle,draw=black!100,fill=black!20,inner sep=6pt,scale=0.4]  (n2) at (72:2.2cm) {};
  \node[circle,draw=black!100,fill=black!20,inner sep=6pt,scale=0.4]  (n3) at (144:2.2cm) {};
  \node[circle,draw=black!100,fill=black!20,inner sep=6pt,scale=0.4]  (n4) at (216:2.2cm) {};
  \node[circle,draw=black!100,fill=black!20,inner sep=6pt,scale=0.4]  (n5) at (288:2.2cm) {};

 \node[circle,draw=black!100,fill=black!20,inner sep=6pt,scale=0.4] (n6) at (0:3.5cm) {};
  \node[circle,draw=black!100,fill=black!20,inner sep=6pt,scale=0.4]  (n7) at (72:3.5cm) {};
  \node[circle,draw=black!100,fill=black!20,inner sep=6pt,scale=0.4]  (n8) at (144:3.5cm) {};
  \node[circle,draw=black!100,fill=black!20,inner sep=1pt,scale=0.5]  (n9) at (216:3.5cm) {$11$};
  \node[circle,draw=black!100,fill=black!20,inner sep=6pt,scale=0.4]  (n10) at (288:3.5cm) {};

  \node[circle,draw=black!100,fill=black!20,inner sep=2pt,scale=0.5]  (n11) at (-10:5cm) {$7$};
  \node[circle,draw=black!100,fill=black!20,inner sep=2pt,scale=0.5]  (n12) at (10:5cm) {$6$};

  \node[circle,draw=black!100,fill=black!20,inner sep=2pt,scale=0.5]  (n13) at (55:5cm) {$5$};
  \node[circle,draw=black!100,fill=black!20,inner sep=2pt,scale=0.5]  (n14) at (72:5cm) {$4$};
    \node[circle,draw=black!100,fill=black!20,inner sep=2pt,scale=0.5]  (n15) at (89:5cm) {$3$};
  
   \node[circle,draw=black!100,fill=black!20,inner sep=2pt,scale=0.5]  (n16) at (134:5cm) {$2$};
   \node[circle,draw=black!100,fill=black!20,inner sep=2pt,scale=0.5]  (n17) at (154:5cm) {$1$};
   
     \node[circle,draw=black!100,fill=black!20,inner sep=1pt,scale=0.5]  (n18) at (278:5cm) {$10$};
     \node[circle,draw=black!100,fill=black!20,inner sep=4pt,scale=0.7]  (n19) at (298:5cm) {};      

     \node[circle,draw=black!100,fill=black!20,inner sep=2pt,scale=0.5]  (n20) at (310:6cm) {$8$};      
     \node[circle,draw=black!100,fill=black!20,inner sep=2pt,scale=0.5]  (n21) at (292:6.2cm) {$9$};

   \foreach \from/ \to in {n1/n2, n2/n3,n3/n4, n4/n5,n5/n1,n1/n6,n6/n11, n6/n12,n2/n7,n3/n8,n4/n9,n5/n10,n7/n13,n7/n14,n7/n15,n8/n16,n8/n17,n10/n18, n10/n19, n19/n20, n19/n21}
    \draw (\from) ->(\to);

 \draw (-3,3) node[scale=0.5] {$2$};
 \draw (-3.7,1.9) node[scale=0.5] {$1$};
 \draw (0.4,4) node[scale=0.5] {$\frac12$};
 \draw (1.1,4.1) node[scale=0.5] {$\frac12$};
 \draw (2,3.4) node[scale=0.5] {$\frac12$};
 \draw (4.2,0.8) node[scale=0.5] {$\frac34$};
 \draw (4.2,-0.8) node[scale=0.5] {$\frac74$};
 \draw (2.85,0.35) node[scale=0.5] {$\frac14$};
 \draw (-2.4,-1.45) node[scale=0.5] {$1$};
  \draw (-2.1,1.8) node[scale=0.5] {$2$};
 \draw (0.6,2.7) node[scale=0.5] {$\frac32$};
 \draw (1.05,-2.7) node[scale=0.5] {$1$};
 \draw (0.7,-4.1) node[scale=0.5] {$5$};
 \draw (1.85,-3.75) node[scale=0.5] {$1$};
  \draw (3.1,-4.25) node[scale=0.5] {$1$};
 \draw (2.15,-5.1) node[scale=0.5] {$1$};
 
 \draw (36:2.1cm) node[scale=0.5] {$2$};
  \draw (108:2.1cm) node[scale=0.5] {$1$};
  \draw (180:2.1cm) node[scale=0.5] {$2$};
  \draw (252:2.1cm) node[scale=0.5] {$1$};
  \draw (324:2.1cm) node[scale=0.5] {$1$};

\end{tikzpicture} &
\includegraphics[width=5cm]{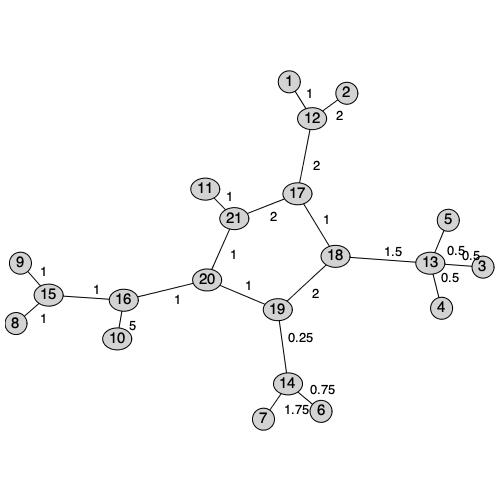}
\\
(a) & (b)
\end{tabular}
\caption{}\label{maple4}
\end{figure}

\section{Conclusions and research directions}

In this paper we present an algorithm, running in time $O(n^4)$, such that, taken as input a distance matrix $D$, firstly returns if $D$ is or not realizable by a tree or a graph of genus 1 and, in case of affirmative answer, it reconstruct the graph itself.
Furthermore, we show that the realization obtained with our algorithm is optimal.

This algorithm concerns the use of the well-known processes of compaction and reduction, even though that of compaction has been slightly modified in such a way to be performed along all the indices of a given distance matrix $D$; this approach leads to the definition of the compaction vector of the matrix $D$ itself.

Moreover, in contrast with other papers, in which compaction and reduction are applied in algorithms, here, we never analyze the behaviour of edges in the graph we are constructing (see, for example, the ``Fusion" step in \cite{V}) or suppose to already have a non optimal realization to transform in an optimal one (as for example in \cite{HHMM}). 
As a matter of fact, looking at the algorithm in the ancillary file \texttt{algorithm.mw}, % in the arXiv version of this paper,
we can see that, once all compaction vectors and reduction matrices are computed, the idea is to construct the weighted adjacency matrix $W$ of the graph which realizes the distance matrix $D$ in input. To this aim, at each step $i$ a block matrix is built, then all blocks are glued together. Extra conditions on $W$ are imposed at the last step, depending on if  the distance matrix $M(t)$, at the last step, is realizable by a tree or by a graph of genus 1.

The conditions given by Theorem \ref{n-agono}, to check if a distance matrix has a realization by a graph of genus 1, lead us to suggest two different directions of research.
The first one is a natural generalization of  Theorem \ref{n-agono},  seeing  if it possible to state a similar result for a realization by a graph of genus $g$, with $g\geq 2$. Even for the case of small $g$, such conditions would be  very interesting also in terms or the following, our second, research direction.

Consider the formula of Theorem \ref{n-agono}. It means that $d_{i\pi^s(i)}$ is equal to the minimum between $d_{i\pi(i)}+d_{\pi(i)\pi^2(i)}+\cdots + d_{\pi^{s-1}(i)\pi^s(i)}$  and $d_{\pi^s(i)\pi^{s+1}(i)}+\cdots + d_{\pi^{n-1}(i)i}$. Then the minimum is attained at least twice among the terms
\[
\begin{array}{c}
d_{i\pi^s(i)},\\ d_{i\pi(i)}+d_{\pi(i)\pi^2(i)}+\cdots + d_{\pi^{s-1}(i)\pi^s(i)},\\ d_{\pi^s(i)\pi^{s+1}(i)}+\cdots + d_{\pi^{n-1}(i)i}.
\end{array}
\]
In terms of Tropical Mathematics (\cite{MS}) this means that the entries of the distance matrix are a zero of the tropical polynomial

\begin{equation}\label{tropngon}
p_{is}:=d_{i\pi^s(i)}\oplus d_{i\pi(i)}\otimes \cdots \otimes d_{\pi^{s-1}(i)\pi^s(i)} \oplus d_{\pi^s(i)\pi^{s+1}(i)} \otimes \cdots \otimes d_{\pi^{n-1}(i)i} 
\end{equation}
Hence the $p_{is}$'s are exactly the tropical equations for the space of weighted $n-$cycles.

The next step is that of understanding how to modify these equations to be satisfied by a distance matrix which has a realization by a graph of genus 1. Reaching this goal, will give us, not only a complete characterization, tropically speaking, of such distance matrices, but also the tropical equations for the moduli space $\mathcal{M}_1$ of elliptic tropical curves. For the same reason, if we find conditions on distance matrices realizable by graphs of genus $g$, and these conditions can be expressed in terms of tropical operations, then we could have a description of moduli space $\mathcal{M}_g$ of plane tropical curves of genus $g$.

Some preliminary results, in this direction, show, for example that if $D$ has a realization by an $n-$cycle $C$ with only one pendant edge for each node of $C$, then its entries are a zero of the following kind of polynomials
\begin{equation}\label{newpoly}
\begin{split}
\tilde{p}_{is}:= &\left[ \left(\bigotimes_{j\not=i,\pi^s(i)}2 a_j \right)\otimes d_{i\pi^s(i)} \right] \oplus \\
\oplus & \left[  \left(\bigotimes_{j=\pi^{s+1}(i)}^{\pi^{n-1}(i)}2 a_j \right) \otimes d_{i\pi(i)}\otimes \cdots \otimes d_{\pi^{s-1}(i)\pi^s(i)} \right]  \oplus \\
\oplus & \left[  \left(\bigotimes_{j=\pi^{1}(i)}^{\pi^{s-1}(i)}2 a_j \right) \otimes d_{\pi^s(i)\pi^{s+1}(i)} \otimes \cdots \otimes d_{\pi^{n-1}(i)i} \right]
\end{split}
\end{equation}

\noindent where the $a_i$ are the entries of the compaction vector of $D$.  
It is easy to check that these polynomials are homogeneous, while the ones in (\ref{tropngon}) are not. Moreover, if $D$ has a realization by an $n-$cycle then its entries are a zero also of this new set of polynomials, giving the hope to use them for a kind of ``four point condition'' for graph of genus 1 and then characterize $\mathcal{M}_1$.

\end{document}